\newcounter{mycount}
\newenvironment{numlist}{\begin{list}{\arabic{mycount}.}%
   {\usecounter{mycount}\labelwidth=1cm\labelsep=3pt\itemsep 0pt}}{\end{list}}
\newtheorem{thm}{Theorem}
\newtheorem{lemma}[thm]{Lemma}
\newtheorem{prop}[thm]{Proposition}
\newtheorem{cor}[thm]{Corollary}
\numberwithin{equation}{section}
\numberwithin{figure}{section}
\numberwithin{thm}{section}
\renewcommand{\P}{\mathbb{P}}
\newcommand{\E}{\mathbb{E}}
\newcommand{\Z}{\mathbb{Z}}
\newcommand{\R}{\mathbb{R}}
\renewcommand\L{{\mathbb L}}
\newcommand\N{{\mathbb N}}
\newcommand\T{{\mathbb T}}
\newcommand{\df}{\textbf}
\newcommand\pc{{p_{\mathrm{c}}}}
\newcommand\pcc{p_{\mathrm c}}
\newcommand\pcs{\vec p_{\mathrm{c}}}
\newcommand\psurf{p_{\mathrm{surf}}}
\newcommand\pfin{p_{\mathrm{fin}}}
\newcommand\ptree{p_{\mathrm{fin}}(\T_b)}
\newcommand{\bin}{{\mathrm{bin}}}
\newcommand{\zz}{\widehat{\Z}}
\renewcommand{\o}{{\mathrm{o}}}
\renewcommand{\O}{{\mathrm{O}}}
\newcommand\rad{\mathrm{rad}}
\newcommand\CS{\mathcal {S}}
\newcommand\CT{\mathcal {T}}
\newcommand\la{\langle}
\newcommand\ra{\rangle}
\newcommand\be{\begin{equation}}
\newcommand\ee{\end{equation}}
\newcommand\comp[1]{#1^{\mathrm{c}}}
\newcommand\pdi{\Delta_n}
\newcommand\Si{\Sigma}
\newcommand\tpc{\widetilde p_{\mathrm{c}}}
\newcommand\Dee{\De_{\mathrm{e}}}
\newcommand\Om{\Omega}
\newcommand\De{\Delta}
\newcommand\om{\omega}
\newcommand\s{\sigma}
\renewcommand\a{\alpha}
\newcommand\g{\gamma}
\newcommand\eps{\epsilon}
\newcommand{\pd}{\partial}
\newcommand{\lra}{\leftrightarrow}
\newcommand\nlra{\nleftrightarrow}
\newcommand{\oo}{\infty}
\newcommand{\sm}{\setminus}
\newcommand{\ol}{\overline}
\newcommand{\es}{\varnothing}
\renewcommand{\emptyset}{\es}
\newcommand\qq{\qquad}
\newcommand\rsim[1]{\stackrel{#1}{\sim}}
\newcommand\resp{respectively}
\newromanexpr\Hess{Hess}
\begin{document}

\title[Percolation of finite clusters and infinite surfaces]
{Percolation of finite clusters \\ and infinite surfaces}

\author[Grimmett, Holroyd, and Kozma]
{GEOFREY R.\ GRIMMETT\\
Statistical Laboratory,
Centre for
Mathematical Sciences,\addressbreak
University of Cambridge,
Wilberforce Road, Cambridge CB3 0WB, UK\addressbreak
email\textup{: \url{g.r.grimmett@statslab.cam.ac.uk}}\addressbreak
URL\textup{: \url{http://www.statslab.cam.ac.uk/~grg/}}
\nextauthor ALEXANDER E.\ HOLROYD\\
Microsoft Research, 1 Microsoft Way, Redmond, WA 98052, USA\addressbreak
email\textup{: \url{holroyd@microsoft.com}}\addressbreak
URL\textup{: \url{http://research.microsoft.com/~holroyd/}}
\nextauthor GADY KOZMA\\
Department of Mathematics, Weizmann Institute of Science,\addressbreak
POB 26, Rehovot 76100, Israel\addressbreak
email\textup{: \url{gady.kozma@weizmann.ac.il}}\addressbreak
URL\textup{: \url{http://www.wisdom.weizmann.ac.il/~gadyk/}}}

\receivedline{Received \textup{16} April \textup{2013}, Revised
\textup{1} October \textup{2013}}

\maketitle

\begin{abstract}
Two related issues are explored for bond percolation on $\Z^d$ (with $d \ge
3$) and its dual plaquette process. Firstly, for what values of the parameter
$p$ does the complement of the infinite open cluster possess an infinite
component? The corresponding critical point $\pfin$ satisfies $\pfin \ge
\pc$, and strict inequality is proved when either $d$ is sufficiently large,
or $d \ge 7$ and the model is sufficiently spread out. It is not known
whether $d \ge 3$ suffices. Secondly, for what $p$ does there exist an
infinite dual surface of plaquettes? The associated critical point $\psurf$
satisfies $\psurf \ge \pfin$.
\end{abstract}

\tableofcontents


\section{Introduction}\label{sec:intro}
Bond percolation on the square lattice $\Z^2$ has a natural dual process,
which is itself a bond percolation model.  This fact has contributed to a
detailed understanding of percolation in two dimensions, see for example
\cite{G99,Kes82,We07}.  The picture is more complicated in $d$ dimensions
with $d\geq 3$, in part because the natural dual model is a process on
\emph{plaquettes} rather than edges, and these plaquettes form
$(d-1)$-dimensional \emph{surfaces}.  Perhaps the first systematic study of
the plaquette process appeared in \cite{ACCFR}, where so-called area- and
surface-laws were proved.  Later papers dealing with plaquettes include
\cite{DeuP,GrHol,GH10}, and also \cite{LMMRS} on first-order phase transition
in the random-cluster model (see also \cite[Chap. 7]{G-RC}).

We study two related questions concerning bond percolation on $\Z^d$ and its
dual process, of which the first is as follows.   Suppose we remove all
vertices that lie in an infinite open cluster of a bond percolation process
with parameter $p$.  The set $X$ that remains is the union of the vertex sets
of all finite clusters, and it induces a subgraph of $\Z^d$.  We denote this
graph by $X$ also.  For what values of $p$ does $X$ possess an infinite
connected component with positive probability? We may define a critical
probability $\pfin$ such that almost surely $X$ has an infinite component for
$p<\pfin$, and not for $p>\pfin$. Let $\pc$ be the percolation critical
probability. Clearly $\pfin\geq\pc$, since $X=\Z^d$ a.s.\ for $p<\pc$.  In
$d=2$ dimensions, we have $\pfin=\pc$, since self-duality implies that for
$p>\pc$ the infinite open cluster contains cycles that enclose every vertex.
In $d=3$ dimensions, it is natural to expect the strict inequality
$\pc<\pfin$, since slightly above $\pc$ the infinite open cluster should not
be sufficiently dense to prevent connections in its complement.  We prove the
last inequality in high dimensions.

\begin{thm}\label{gady-intro}
For $d\geq 19$, we have the strict inequality $\pc<\pfin$.
\end{thm}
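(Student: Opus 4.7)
The plan is to exploit the sparsity of the infinite cluster in high dimensions. For $d\ge 19$, the lace expansion of Hara and Slade yields the triangle condition at $\pc$, and in particular $\theta(\pc)=0$ (writing $\theta(p):=\P_p[\o\lra\oo]$) with $\theta$ continuous at $\pc$. Hence $\theta(p)\downarrow 0$ as $p\downarrow\pc$, so the infinite open cluster occupies an arbitrarily small density of vertices for $p$ slightly above $\pc$. Moreover, in the same high-dimensional regime the one-arm probability at $\pc$ decays polynomially, $\P_{\pc}[\o\lra\pd\La_N]=O(N^{-2})$ (Kozma--Nachmias, via Barsky--Aizenman together with the triangle condition). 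Thus on boxes of side $N\gg 1$, all but a $O(N^{-2})$-fraction of vertices already have a finite cluster of diameter at most $N$, and one should be able to percolate in this set.

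I would implement this by a block renormalization. Fix a truncation scale $N$ and set
\[
   X_N \ :=\ \bigl\{v\in\Z^d \,:\, v\nlra \pd\La_N(v)\bigr\}\ \subseteq\ X,
\]
where $\La_N(v)$ is the box of side $2N{+}1$ centred at $v$. Membership in $X_N$ depends only on edges inside $\La_N(v)$, so $(\ind_{v\in X_N})_v$ is $(2N{+}1)$-dependent on $\Z^d$; its marginal obeys
\[
   \P_p[v\notin X_N]\ \le\ \theta(p)+CN^{-2}+\eps(p)
\]
with $\eps(p)\to 0$ as $p\downarrow\pc$. Partition $\Z^d$ into cubes of side $L>2N$ and call a cube \emph{good} if, for each pair of opposite faces, there is an $X_N$-path inside the cube joining them. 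Because each cube-event depends only on edges within distance $N$ of the cube, the field of good cubes is $1$-dependent on the renormalized lattice. A Grimmett--Marstrand-style argument, exploiting the high density of $X_N$, then shows that the probability of a good cube tends to $1$ as $N,L\to\oo$ and $p\downarrow\pc$, suitably coupled.

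Finally, I would apply the stochastic-domination theorem of Liggett--Schonmann--Stacey to dominate the $1$-dependent field of good cubes by a Bernoulli site percolation with density exceeding $p_{\mathrm c}^{\mathrm{site}}(\Z^d)<1$. The resulting infinite cluster of good cubes glues, via the internal $X_N$-crossings, into an infinite component of $X_N\subseteq X$, yielding $\pfin>\pc$.

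The main obstacle is the quantitative juggling of parameters in the middle step: the threshold density needed for $1$-dependent domination is an explicit constant depending on $d$, and one must choose $N$, $L$ and $p-\pc$ simultaneously so that the good-block density strictly exceeds this threshold. This rests on the high-dimensional inputs mentioned above---the polynomial decay of the one-arm probability at $\pc$ and the vanishing $\theta(p)=O(p-\pc)$ as $p\downarrow\pc$---and on a careful adaptation of the Grimmett--Marstrand block-construction to the complement of the infinite cluster rather than to the cluster itself.
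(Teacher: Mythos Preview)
Your high-level architecture---one-arm bound from \cite{KN11}, a block renormalization, and Liggett--Schonmann--Stacey domination---is exactly the paper's, and the paper indeed derives Theorem~\ref{gady-intro} as an immediate corollary of a quantitative statement (Theorem~\ref{thm:koz}) that $\P_{\pc}(\rad(C)\ge n)\le c/n$ with $c$ small implies $\pc<\pfin$. Where you diverge is the definition of a ``good'' block and the argument that such blocks are likely, and this is where your proposal has a genuine gap.

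The paper does \emph{not} ask for crossings. For the box $B_n$ it sets $R_n=\{x\in B_n: x\lra\pd B_{2n}\}$, so $\E_{\pc}|R_n|\le c\,2^d n^{d-1}$ by the one-arm bound, and declares the block \emph{good} if $B_n\setminus R_n$ contains a connected component of size at least $\tfrac45|B_n|$. The key device is the discrete isoperimetric inequality in a box \cite{BL91}: any component $C$ of $B_n\setminus R_n$ with $|C|\le\tfrac45|B_n|$ has $|\pdi C|\ge K|C|^{(d-1)/d}$. Summing over components and using $\sum_i|\pdi C_i|\le 2d|R_n|$ forces $|R_n|\gtrsim n^{d-1}$ on the ``no giant component'' event; Markov's inequality then gives $\P_{\pc}(\text{good})\ge 1-8cd/K-2c/n$, which exceeds the LSS threshold once $c$ is small. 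Blocks are taken to \emph{overlap} in half a box, so two giant components of density $\ge\tfrac45$ in adjacent blocks must intersect, and the gluing is automatic.

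Your substitute---``a Grimmett--Marstrand-style argument'' showing $X_N$-crossings of an $L$-box---does not work as stated. Grimmett--Marstrand is a sprinkling/seed construction for building \emph{open} connections in supercritical bond percolation; it does not transfer to connectivity in the complement of long-arm vertices. High marginal density of $X_N$ alone is insufficient: applying LSS directly to $X_N$ on the original lattice would require the defect probability to beat a threshold of order $N^{-d}$ (since the dependence range is $\sim N$), not the $N^{-2}$ you have. And with disjoint cubes rather than overlapping blocks, you have given no mechanism for crossings in adjacent cubes to connect. The isoperimetric/giant-component device is precisely what makes both the high-probability step and the gluing go through; it is the missing idea in your sketch.
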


Our proof of Theorem~\ref{gady-intro} relies on the recent proof in
\cite{KN11} that the one-arm critical exponent $\rho$ takes its mean-field
value $\tfrac12$ in high dimensions.  Indeed, we prove that $\pc<\pfin$
provided $\rho<1$ (see Theorem~\ref{thm:koz} for the precise statement); this
is believed to be the case for all $d\geq 5$ but not for $d=3,4$. We do not
know whether or not $\pc<\pfin$ for $3\leq d\leq 18$.

Theorem~\ref{thm:koz} is stronger than Theorem~\ref{gady-intro} in two
further respects.  Firstly, $X$ can be replaced with the complement of the
infinite cluster of percolation on the spread-out lattice (while
connectedness in $X$ still refers to $\Z^d$). This enables
Theorem~\ref{gady-intro} to be extended to sufficiently spread-out lattices
for all $d\geq 7$.  Secondly, $X$ may be replaced with the set of vertices that
are not within distance $F$ of the infinite cluster, for 
any given $F<\oo$.

When $\pc < p < \pfin$, there is simultaneous occurrence of two disjoint
infinite objects, namely an infinite open cluster and an infinite component
of the non-percolating region. This is reminiscent of the result of Campanino
and Russo \cite{CamR} that \emph{site} percolation on $\Z^3$ with $p=\frac12$
contains both an infinite open and an infinite closed cluster.

Using fairly standard techniques, we show that $\pfin<1$ for $d\ge 2$, and
that an infinite component of $X$ (when it exists) is (a.s.) unique.  When
the lattice $\Z^d$ is replaced with a regular tree, $\pfin$ may be explicitly
computed (as done essentially in \cite{JJ99}) and it satisfies
$\pc<\pfin$.

\pagebreak
The above results lead to a partial answer to (and were in part motivated by)
our second main question, which concerns the plaquette process that is dual
to bond percolation on $\Z^d$.  A \df{plaquette} is a $(d-1)$-dimensional
face of a unit cube centred at a vertex of $\Z^d$. An edge $e$ of $\Z^d$
crosses a unique plaquette, called the \df{dual} of $e$. We declare a
plaquette \df{open} if and only if its dual edge is closed in the bond
percolation model. Thus the plaquettes form an i.i.d.\ percolation process
with parameter $1-p$. Open plaquettes can form surfaces, and one may ask
whether these surfaces undergo a phase transition at $\pc$, in the sense that
`infinite surfaces' exist for $p<\pc$ and not for $p>\pc$. Such a statement
is of course contingent on a precise definition of `infinite surface'.  We
prove that, according to one natural choice of definition, the phase
transition does \emph{not} occur at $\pc$ when $d$ is sufficiently large.

We call two plaquettes \df{adjacent} if their intersection is a
$(d-2)$-dimensional cube.  We say that a set of plaquettes is \df{connected}
if it induces a connected graph via this adjacency relation, and we say that
it has \df{no boundary} if every $(d-2)$-cube lies in an even number of
plaquettes 
(in other words, if it is a $(d-1)$-cycle in the homology over $\Z/2\Z$).
A \df{surface} is a connected set of plaquettes with no
boundary.  We define the critical probability $\psurf$ such that there exists
(a.s.)  an infinite surface of open plaquettes for $p<\psurf$, and not for
$p>\psurf$.

\begin{thm}\label{relation-intro}
For  $d\geq 2$, we have $\pfin\leq \psurf$.
\end{thm}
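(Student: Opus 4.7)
The plan is to construct an infinite open surface directly from an infinite component of $X$. Fix $p<\pfin$, so that almost surely $X$ has an infinite connected component $C$. Let $\Pi$ be the set of plaquettes dual to edges of $\Z^d$ with exactly one endpoint in $C$. Any such edge $\{u,v\}$ with $u\in C$ and $v\notin C$ must be closed: if it were open, then $v$ would belong to $u$'s finite cluster, hence to $X$, and hence (by $\Z^d$-adjacency to $u$) to the same $X$-component $C$ as $u$---a contradiction. Therefore every plaquette of $\Pi$ is open.

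I next check that each plaquette-adjacency connected component of $\Pi$ is itself a surface. At any $(d-2)$-cube $c$, the four incident plaquettes are dual to the four edges of a unit $4$-cycle in the $2$-plane perpendicular to $c$; walking around this cycle, the number of transitions into and out of $C$ is even, so an even number of these plaquettes lies in $\Pi$. Moreover, any two distinct plaquettes meeting at $c$ intersect in exactly $c$ (a $(d-2)$-cube) and are therefore adjacent in the sense of the paper, so all the $\Pi$-plaquettes at $c$ lie in the same connected component of $\Pi$. Consequently each component of $\Pi$ also has even incidence at every $(d-2)$-cube, and hence is a surface.

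It remains to produce an infinite connected component of $\Pi$. Realize $C$ geometrically as the closed set $\bar C:=\bigcup_{v\in C}(v+[-\tfrac12,\tfrac12]^d)\subset\R^d$, which is connected (since $C$ is) and unbounded (since $C$ is infinite), and whose topological boundary in $\R^d$ equals $\bigcup_{\pi\in\Pi}\pi$. Let $D$ be the connected component of $\R^d\setminus\bar C$ containing the vertices of the infinite open cluster; these vertices all lie in a single component of $\R^d\setminus\bar C$ via their shared open faces, since the infinite cluster is connected in $\Z^d$. Then $D$ is unbounded. Moreover $\partial D$ is unbounded: if $\partial D\subseteq B$ for some ball $B$, then $D\cap(\R^d\setminus B)$ would be both open and closed in the connected space $\R^d\setminus B$, hence equal to the whole exterior, forcing $\bar C\subseteq B$ and contradicting that $C$ is infinite.

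The principal obstacle is upgrading ``$\partial D$ is unbounded'' to ``$\partial D$ has an unbounded connected component'': in principle an unbounded closed subset of $\R^d$ could be a disjoint union of countably many bounded components accumulating at infinity. Ruling this out requires a topological argument specific to the configuration; the key extra inputs are that both $D$ and $\R^d\setminus D$ are connected and unbounded (the second because $\R^d\setminus D$ consists of $\bar C$ together with the other components of $\R^d\setminus\bar C$, each of which shares boundary points with $\bar C$), together with the rectilinear structure of $\bigcup\Pi$ as a $(d-1)$-complex on $\Z^d$. Given such an unbounded connected component of $\partial D$, the plaquette-adjacency equivalence established above translates it into an infinite connected component of $\Pi$, that is, an infinite open surface. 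Hence $p\leq\psurf$ for every $p<\pfin$, giving $\pfin\leq\psurf$.
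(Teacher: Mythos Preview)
Your argument has two genuine gaps. The first you acknowledge: the passage from ``$\partial D$ is unbounded'' to ``$\partial D$ has an unbounded connected component'' is the crux, and you do not supply it. Your listed ingredients (both $D$ and $\R^d\setminus D$ connected and unbounded) are in fact enough for the topological statement, via unicoherence of $\R^d$ applied to $\overline D$ and $\R^d\setminus D$, but you do not invoke this; and even then a further step remains, namely upgrading topological connectedness of the point-set $\partial D$ to $\sim$-connectedness of the corresponding plaquette set, which is not automatic since two plaquettes may meet only in a face of dimension $\le d-3$. The second gap is the regime $p\le\pc$: your definition of $D$ presupposes an infinite open cluster, and for $p<\pc$ one has $X=\Z^d$, hence $C=\Z^d$ and your $\Pi$ is empty. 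The theorem includes $d=2$, where $\pfin=\pc$ and the interval $(\pc,\pfin)$ is empty; and even for $d\ge3$ the strict inequality $\pfin>\pc$ is not known in general, so the case $\pfin=\pc$ cannot be dismissed.

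The paper avoids both difficulties by a different route. It first disposes of $\pfin=\pc$ via the separately proved inequality $\pc\le\psurf$ (Theorem~\ref{sub-crit-surf}). For $\pfin>\pc$ it invokes Proposition~\ref{sep} with $A$ equal to the infinite \emph{open cluster} rather than your $C$; that proposition is established not by a topological boundary argument but by finite approximation: with $A_n$ the component of $A\cap B_n$ containing a fixed vertex, each $\Pi(A_n)$ is a surface by Lemma~\ref{topolem}, and Lemma~\ref{lem:prel} shows that the limit $\Pi_\infty=\lim_n\Pi(A_n)$ exists, has empty boundary and no finite components, while non-emptiness follows from any single edge joining $A$ to an infinite component of $A^{\mathrm c}$. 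This combinatorial limit completely sidesteps the connectedness-of-$\partial D$ issue.
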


Theorems \ref{gady-intro} and \ref{relation-intro} have the following
immediate consequence.

\begin{cor}
For  $d\geq 19$, we have the strict inequality $\pc<\psurf$.
\end{cor}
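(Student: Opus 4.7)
The plan is simply to chain the two theorems stated immediately above. Since Theorem~\ref{gady-intro} provides, for $d\geq 19$, the strict inequality $\pc<\pfin$, and Theorem~\ref{relation-intro} provides, for $d\geq 2$, the (non-strict) inequality $\pfin\leq\psurf$, the two combine to yield $\pc<\pfin\leq\psurf$ whenever $d\geq 19$, and in particular $\pc<\psurf$.

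There is no real obstacle: the corollary is a one-line deduction from the two cited results, and the authors themselves describe it as an ``immediate consequence.'' The only conceptual content is to observe that the threshold for large-enough dimensions matches (the hypothesis $d\geq 19$ of Theorem~\ref{gady-intro} is stricter than the $d\geq 2$ hypothesis of Theorem~\ref{relation-intro}), so the common range in which both apply is $d\geq 19$, which is exactly the range asserted in the corollary. All the substantive work sits inside Theorems~\ref{gady-intro} and~\ref{relation-intro}; nothing further is required here.
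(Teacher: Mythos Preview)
Your proposal is correct and matches the paper's approach exactly: the authors state only that the corollary is an ``immediate consequence'' of Theorems~\ref{gady-intro} and~\ref{relation-intro}, and your chaining $\pc<\pfin\leq\psurf$ for $d\geq 19$ is precisely that deduction.
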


Thus, infinite dual surfaces of plaquettes (as defined above) exist even
\emph{strictly} above $\pc$ in high dimensions.  When $d=2$, an infinite
surface is a connected union of doubly infinite dual paths, and therefore
$\psurf=\pc$ ($=\pfin$) in this case.  We do not know whether $\pc<\psurf$
for $3\leq d\leq 18$.

One may also impose further topological or other constraints on surfaces.  As
a preliminary result in this direction, we show in
Proposition~\ref{prop:surface} that, for $p>0$ sufficiently small, there
exists a surface of open plaquettes that is uniformly homeomorphic to a
hyperplane.  In dimension $3$, surfaces of zero genus (homeomorphic to
planes, spheres or discs) arise as the natural candidates for dual objects that block
\emph{entangled} bond-clusters.  However, there is additional complication
here: existence of (say) a sphere enclosing the origin in the complement of
the set of open bonds does not imply existence of a sphere enclosing the
origin composed of open plaquettes.  For details see \cite{GrHol}.

The above questions are presented more formally in Section \ref{sec:not}
below.  Section \ref{sec:top} contains a key topological lemma concerned with
the external boundary of a finite subset of $\Z^d$.  This is followed in
Sections \ref{sec:perc}--\ref{ssec:trees} by treatment of the union
$X$ of the
finite clusters, in the two cases of a lattice and a regular tree
respectively.  Section \ref{sec:surf} is devoted to infinite surfaces, and
their connection with $X$.

\section{Notation}\label{sec:not}

Let $d \ge 1$, and let $\Z^d$ be the set of $d$-vectors
$x=(x_1,x_2,\dots,x_d)$ of integers. A \df{facet} is a subset of $\Z^d$ of
the form $F=x+(A_1\times\dots\times A_d)$ where $x\in\Z^d$ and each $A_i$ is
either $\{0\}$ or $\{0,1\}$.  If $\{0,1\}$ appears $k$ times, we call $F$ a
$k$-dimensional facet, or a $k$-facet. We shall focus on $1$-facets and
$(d-1)$-facets, called respectively \df{edges} and \df{plaquettes}. The set
of edges is denoted $\E^d$, and the associated lattice is the graph
$\L^d=(\Z^d,\E^d)$.  An edge $\{x,y\}$ will also be written as the unordered
pair $\langle x,y \rangle$.

Two facets of different dimensions are said to be \df{incident} if one is a
subset of the other.  Two plaquettes $\pi_1$, $\pi_2$ are \df{adjacent},
written $\pi_1 \sim \pi_2$, if some $(d-2)$-facet is incident to both.  A set
$P$ of plaquettes is called \df{connected} if the graph with vertex set $P$
and adjacency relation $\sim$ is connected.  The \df{boundary} of a set $P$
of plaquettes is the set of all $(d-2)$-facets that are incident to an odd
number of elements of $P$.  A \df{surface} is a (finite or infinite)
connected set of plaquettes with empty boundary.

We make use of another notion of adjacency between plaquettes also, namely
$\pi_1 \rsim 1 \pi_2$ if some $1$-facet is incident to both.

We introduce the shifted (\lq dual') set $\zz^d:=\Z^d+h$, where
$h:=(\tfrac12,\tfrac12,\dots,\tfrac12)\in\R^d$.  A facet of $\zz^d$ is any
subset of the form $F+h$ where $F$ is a facet of $\Z^d$, and the concepts of
incidence, adjacency, connectedness, boundaries and surfaces are defined as
for $\Z^d$. For any $k$-facet $F$ of $\Z^d$, there is a unique $(d-k)$-facet
$F'$ of $\zz^d$ with the same centre of mass as $F$, and we call $F$ and $F'$
\df{duals} of one another.  (Equivalently: (i) $F'$ is the unique
$(d-k)$-facet of $\zz^d$ whose convex hull intersects the convex hull of $F$;
or (ii) if $F$ is expressed as $x+(A_1\times\dots\times A_d)$, then $F'$ is
obtained by replacing each occurrence of $\{0,1\}$ with $\{\tfrac12\}$, and
each occurrence of $\{0\}$ with $\{-\tfrac12,\tfrac12\}$.)  In particular,
the dual of an edge $e\in \E^d$ is a plaquette, which we denote $\pi(e)$. Let
$\Pi$ denote the set of all plaquettes of $\zz^d$.

We turn now to probability. Let $\Om=\{0,1\}^{\E^d}$ be endowed with the
product $\s$-field. Let $p\in [0,1]$, and write $\P_p$ for product measure on
$\Om$ with parameter $p$, and let $\E_p$ be the associated expectation
operator. For $\om\in\Om$, we call the edge $e$ \df{open} (\resp,
\df{closed}) if $\om(e)=1$ (\resp, $\om(e)=0$). The plaquette $\pi(e)$ is
declared \df{open} (\resp, \df{closed}) if $e$ is closed (\resp, open). Thus,
each plaquette is open with probability $1-p$. Percolation theory is
concerned with the structure of the connected components, or \df{open
clusters}, of the graph $(\Z^d,\eta(\om))$, where $\eta(\om)=\{e:\om(e)=1\}$
is the set of open edges of the configuration $\om$. Let $\pc=\pc(d)$ denote
the critical probability of bond percolation on $\L^d$, that is, the infimum
of  $p$ for which there is strictly positive probability of an infinite open
cluster. See \cite{G99} for a general  account of percolation.

We describe next the two events studied in this paper. Let
\be \CS:=\{\Pi
\text{ contains an infinite open surface}\}. \label{sdef} \ee
Since $\CS$ is a decreasing subset of $\Om$, and is invariant under
lattice-shifts, there exists $\psurf=\psurf(d) \in[0,1]$ such that \be
\P_p(\CS) =\,\begin{cases} 1 &\text{if } p <\psurf,\\
0 &\text{if } p>\psurf.
\end{cases}
\label{pcsurf}
\ee

A \df{path} of $\L^d$ is an alternating sequence $v_0,e_1,v_1,e_2,\dots$ of
distinct vertices $v_i$ and edges $e_i=\la v_{i-1},v_i\ra$. A path is called
\df{open} if all its edges are open. Let $x,y\in \Z^d$. We write $x \lra y$
if there exists an open path of $\L^d$ with endpoints $x$ and $y$. We write
$x\lra\oo$ if there exists an infinite open path with endpoint $x$. Let
$$
X:=\{x\in\Z^3: x \nlra \oo\},
$$
so that $X$ is the union of the vertex-sets of the finite open clusters
of the percolation process.  We turn $X$ into a graph by adding all edges
of $\E^d$ with both endpoints in $X$. Let
\be
\CT := \{X\text{ has an infinite connected component}\}.
\label{tdef}
\ee
As in the case of $\CS$, there  exists $\pfin=\pfin(d)\in[0,1]$ such that
\be
\P_p(\CT) =\,\begin{cases} 1 &\text{if } p <\pfin,\\
0 &\text{if } p>\pfin.
\end{cases}
\label{pcfin}
\ee

\section{Topological lemma}\label{sec:top}

This section contains a fundamental lemma concerning the external edge-boundary of
a finite subset of $\Z^d$. The case when $d=2$ appears essentially
in \cite[App.]{Kes82}. The lemma is closely related to results for $d$ dimensions
of \cite{DeuP,GrHol,Timar} and perhaps elsewhere.
The proof given here makes use of \cite[Thm 7.3]{G-RC}.

Let $A \subseteq \Z^d$, and let the \df{external boundary} $\De A$ be the set
of vertices of $\Z^d \setminus A$ that (i) are adjacent to some element of
$A$, and (ii) lie in some infinite path of $\L^d$ not intersecting $A$. The
(external) \df{edge-boundary} $\Dee A$ is the set of edges $e=\la x,y\ra$
such that $x \in A$ and $y\in \De A$. For $A\subseteq\Z^d$, we define
$$\Pi(A):=\{\pi(e): e\in \Dee A\},$$
the set of plaquettes
dual to edges in its edge-boundary. Let $|A|<\oo$. A set $\Si\subseteq\Pi$
of plaquettes is said to \df{separate
$A$ from infinity} if  every infinite path starting in $A$ contains some edge $e$ with
$\pi(e) \in \Si$.  Similarly, a set $\Si$ of plaquettes of $\Z^d$ is said to \df{separate
$A$ from infinity} if every infinite path starting in $A$ contains some vertex incident to some
element of $\Si$.

\begin{lemma}\label{topolem}
If $A\subset\Z^d$ is finite and connected then $\Pi(A)$ is a surface.
\end{lemma}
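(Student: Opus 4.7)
My plan is to verify the two defining properties of a surface separately — empty boundary and connectedness — for $\Pi(A)$, after first making a convenient simplifying reduction.

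\textbf{Reduction.} Since $A$ is finite, $\Z^d\setminus A$ has a unique infinite component $U$, and $\Delta A$ is precisely the set of vertices of $U$ adjacent to $A$. Let $A':=\Z^d\setminus U$, i.e., $A$ together with the finite components of its complement. Each such finite component is connected and touches $A$, so $A'$ is finite and connected. Any edge joining $A'\setminus A$ to $U$ would lie entirely in $\Z^d\setminus A$ while crossing two distinct components, which is impossible; hence $\Dee A=\Dee A'$ and $\Pi(A)=\Pi(A')$. I may therefore assume that $\Z^d\setminus A$ is connected, so that $\Dee A$ is exactly the edge-cut in $\L^d$ of the partition $\{A,\Z^d\setminus A\}$.

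\textbf{Empty boundary.} Each $(d-2)$-facet of $\zz^d$ is incident to exactly four plaquettes of $\zz^d$, namely the duals of the four edges of a single $2$-facet $S\subset\Z^d$. Thus empty boundary is equivalent to the assertion that each $2$-facet $S$ contains an even number of edges of $\Dee A$. But $S$ is a $4$-cycle in $\L^d$ and $\Dee A$ is an edge-cut, so this count equals the number of sign changes of $\ind_A$ around the cycle, which is automatically even.

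\textbf{Connectedness.} This is the main obstacle. Here I would appeal to \cite[Thm 7.3]{G-RC}, which in the present language asserts that the external edge-boundary of a finite connected subset of $\Z^d$ with connected complement is connected in the plaquette-adjacency graph induced by $\sim$. Our reduction brings us into precisely this setting, so $\Pi(A)$ is connected and the lemma follows. If a direct argument were wanted, the natural route would be a sliding/rotation procedure: given $e,e'\in\Dee A$ with $A$-endpoints $a,a'$, use a path in $A$ from $a$ to $a'$ to build a chain of boundary edges connecting $e$ to $e'$ with consecutive edges sharing a $2$-facet. The key local move is enabled by the even-parity step above: whenever $e\in\Dee A$ lies in a $2$-facet $S$, at least one other edge of $S$ also lies in $\Dee A$, providing a plaquette of $\Pi(A)$ adjacent to $\pi(e)$.
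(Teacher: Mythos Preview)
Your proof is correct and follows essentially the same route as the paper: both arguments fill in the holes of $A$ (the paper's $\ol A$, your $A'$), show this leaves $\Pi(A)$ unchanged, invoke \cite[Thm~7.3]{G-RC} for connectedness, and use the parity of crossings of a $4$-cycle by an edge-cut for the empty-boundary claim. The only organizational difference is that you perform the reduction first and then run both arguments on the reduced set (which makes the empty-boundary step a one-liner), whereas the paper proves empty boundary directly for the original $A$; also, be aware that \cite[Thm~7.3]{G-RC} as cited produces a \emph{connected separating subset} $Q\subseteq\Pi(A')$, and one still needs the easy observation that any $\pi\in\Pi(A')\setminus Q$ would give an escape route, forcing $Q=\Pi(A')$.
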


\begin{proof}
We show first that $\Pi(A)$ is connected.
Let $W_1,W_2,\dots,W_k$ be the finite, connected components
of $\Z^d\sm A$, and let
$$
\ol A:= A\cup\left(\bigcup_{i=1}^k W_k\right).
$$
We call the $W_k$ the \df{holes} of $A$. It is seen as follows that $\ol A$
has no holes. Suppose $w$ lies in a hole of $\ol A$. Then any infinite path
$\gamma$ from $w$ has a last vertex $f(\gamma)$ lying in $\ol A$. It must be
that $f(\gamma) \in A$, since $f(\gamma)\in W_i$ would contradict the
definition of the $W_i$. Therefore, $w \in \ol A$, a contradiction.

We prove two facts about the boundaries of $A$ and $\ol A$. Firstly,
\be
\Pi(\ol A) = \Pi(A).
\label{gen0}
\ee
That $\Pi(A) \subseteq \Pi(\ol A)$ follows from the definition of $\Pi(A)$.
Conversely, if $\pi(e) \in \Pi(\ol A) \sm \Pi(A)$, then $e$ has one endvertex denoted $a$
in $\ol A \sm A$, and another that is joined to infinity off $\ol A$. Thus $a$ lies in a hole
of $A$, in contradiction of the fact that $a$ is joined to infinity off $A$.

Let $\ol \Pi(\ol A)$ be the set of plaquettes
whose dual edges have exactly one endvertex in $\ol A$.
Evidently, $\Pi(\ol A) \subseteq \ol\Pi(\ol A)$, and we claim that
\be
\Pi(\ol A) = \ol \Pi(\ol A).
\label{gen1}
\ee
If, on the contrary,
$\pi(e) \in \ol\Pi(\ol A) \sm \Pi(\ol A)$, then
$e$ has an endvertex lying in a hole of $\ol A$.
Since $\ol A$ has no holes, \eqref{gen1} follows.

By \cite[Thm 7.3]{G-RC}, there exists a subset $Q
\subseteq \ol \Pi(\ol A)$ that separates $\ol A$ from $\oo$ and is connected.
By \eqref{gen0}--\eqref{gen1}, $Q \subseteq \Pi(A)$.
Also, $\Pi(A) \subseteq Q$, since if there exists $\pi\in \Pi(A)
\sm Q$, then $Q$ does not separate $A$ from $\oo$. Therefore,
$\Pi(A)=Q$, implying that $\Pi(A)$ is connected.

Next we show that $\Pi(A)$ has empty boundary. Write $u_j$ for a unit vector
in the direction of increasing $j$th coordinate. Let $e\in \Dee A$, and
assume without loss of generality that $e$ has the form $\la a, a+u_1\ra$
with $a\in A$; the other cases are treated in the same way after rotation of
$\Z^d$. It suffices to take $a=0$. The plaquette corresponding to the edge
$e=\la 0,u_1\ra$ is $\pi(e) = \{\frac12\} \times \{-\frac12,\frac12\}^{d-1}$,
and is incident to $2(d-1)$ $(d-2)$-facets of $\zz^d$, of which we shall
consider $f:=\{\frac12\}^2\times\{-\frac12,\frac12\}^{d-2}$; the other
such $(d-2)$-facets may be treated similarly.

The facet $f$ is incident to exactly four plaquettes in $\Pi$, namely the $\pi(e_i)$ with
$e_1=e=\la 0,u_1\ra$, $e_2=\la u_1,u_1+u_2\ra$, $e_3=\la u_1+u_2,u_2\ra$,
$e_4=\la u_2,0\ra$. As we proceed around the cycle $e_1,e_2,e_3,e_4$,
we encounter vertices that either lie in $A$ or are connected to infinity off $A$.
Each time we pass from a vertex in one category to a vertex in the other, we traverse a plaquette
in $\Pi(A)$. Hence we traverse an even number of such plaquettes, and therefore
$f$ is incident to an even subset of $\Pi(A)$.
\end{proof}

\section{Percolation of finite clusters}\label{sec:perc}

Let $G$ be an infinite, locally finite graph,
and consider bond percolation on $G$
with parameter $p$. Let $X$ be the set of vertices
lying in no infinite cluster of the process. The subgraph of $G$ induced by $X$ is also denoted $X$.
We consider the question
of whether $X$ has an infinite connected component.  The case of $\Z^d$
is treated in this section, and the corresponding issue for a regular tree
is considered briefly in the next section.

Recall the definition \eqref{pcfin} of the critical point $\pfin=\pfin(d)$
for the hypercubic lattice $\L^d$.
When $p<\pc$, all clusters are (a.s.)\ finite, so that
$X= \Z^d$. Hence, $\pc\le \pfin$.

The main goal of this section is to prove Theorem \ref{gady-intro}, which follows from Theorem \ref{thm:koz} below,
by the main result of \cite{KN11}.  Later in this section we also prove that $\pfin<1$ (Theorem~\ref{pfin_less}),
and that $X$ has at most one infinite component (Theorem~\ref{bk-thm}).

In order to state Theorem \ref{thm:koz} we introduce next some further notation.
Let $\|\cdot\|$ denote $L^\oo$
distance on $\Z^d$.  Let $S,F \in \{0,1,2,\dots\}$; the parameter $S$ is the range of a spread-out model,
and $F$ is a `fattening' parameter. Consider the \df{spread-out} percolation
model on $\Z^d$ in which edges are placed between any pair $x, y \in\Z^d$ with $0<\|x-y\|\le S$,
and each edge is declared independently open with probability $p$. The corresponding probability measure is denoted $\P_p^S$,
and $\pcc^S$ denotes the critical point.

Let $I$ be the set of vertices that lie in infinite open clusters.
Write
$$
I^F:= \{x\in \Z^d: \|x-y\| \le F \text{ for some } y \in I\},
$$
and
let $X^F = \Z^d \setminus I^F$. With $X^F$ considered as a subgraph of
$\L^d$, we let $\CT^F$ be the event that $X^F$ has an infinite
connected component, and
\begin{equation}\label{g45}
\pfin^{S}(F) := \sup\{p: \P_p^S(\CT^F) >0\}.
\end{equation}
Let $\rad(C)$ denote the radius of the open
cluster $C$ at the origin,
\be\label{def-rad}
\rad(C) := \sup\{\|x\|: 0 \lra x\}.
\ee

\pagebreak
\begin{thm}\label{thm:koz}
Let $d \ge 2$ and $S,F \ge 0$.  There exists $c>0$ such that the following holds. Suppose
that, for all large $n$, say $n \ge N(c)$,
\begin{equation}\label{assume0}
\P_{\pcc^S}^S(\rad(C) \ge n) \le \frac cn.
\end{equation}
Then $\pcc^F < \pfin^{S}(F)$.
\end{thm}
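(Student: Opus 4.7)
The plan is to replace the non-local infinite cluster $I$ by a local proxy and then apply a coarse-graining argument. Since $\CT^F$ is a tail event (flipping a single edge alters $I$, and hence $I^F$ and $X^F$, on only a finite set of vertices), Kolmogorov's $0$--$1$ law reduces the task to showing $\P_p^S(\CT^F)>0$ for some $p>\pcc^S$. For any fixed $n$, the event $\{v \lra \partial B_n(v)\}$ is a cylinder event in the spread-out model (any witnessing path can be truncated to lie within $B_{n+S}(v)$), so $p \mapsto \P_p^S(\rad(C(v))\ge n)$ is a polynomial in $p$. Combined with the hypothesis \eqref{assume0}, this yields, for each sufficiently large $n$, some $\delta=\delta(n)>0$ such that
\[
\P_p^S\bigl(v \lra \partial B_n(v)\bigr) \le \frac{2c}{n}, \qquad p \in [\pcc^S,\pcc^S+\delta].
\]

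Now introduce the local proxy $T_n := \{v \in \Z^d : v \lra \partial B_n(v)\}$. Every vertex of $I$ has $\rad(C(v))=\infty$, so $I \subseteq T_n$; hence $I^F \subseteq T_n^F$ and $X^F \supseteq \Z^d \setminus T_n^F$. The indicator $\ind\{v \in T_n^F\}$ depends only on edges with an endpoint in $B_{n+F}(v)$, so $T_n^F$ is a $k$-dependent site process on $\Z^d$ with $k \le 2(n+F)+S$ and marginals at most $(2F+1)^d \cdot 2c/n$. It then suffices to find an infinite $\L^d$-connected component of $\Z^d\setminus T_n^F$ with positive probability.

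This is handled by coarse-graining. Choose $c$ small (depending only on $d$ and $F$), then $n$ large. Partition $\Z^d$ into slightly overlapping cubes of side $\ell$ of order $n+F+S$, and declare a cube $B$ \emph{good} if $|B \cap T_n^F| < \eps\,\ell^{d-1}$ for a small $\eps=\eps(d)>0$. Markov's inequality then gives
\[
\P_p(B \text{ bad}) \le \frac{\ell^d \cdot (2F+1)^d \cdot (2c/n)}{\eps\,\ell^{d-1}} \le C_{d,F}\cdot c/\eps,
\]
which is arbitrarily small for $c$ small. The good-block indicators are finite-range-dependent on the block lattice, so by Liggett--Schonmann--Stacey they dominate a supercritical Bernoulli site percolation, and thus the good blocks form an infinite cluster almost surely. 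Within each good cube $B$, the obstacle count is below the vertex min-cut of $B$ between opposite faces (which equals $\ell^{d-1}$), so by Menger's theorem $B\setminus T_n^F$ contains a path joining any two opposite faces of $B$. The block overlap forces such crossings in adjacent good blocks to share a common vertex, yielding the desired infinite $\L^d$-connected subset of $\Z^d\setminus T_n^F \subseteq X^F$.

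The main obstacle is the last geometric step: arranging the block decomposition, and the precise definition of ``good'', so that crossings in two adjacent good blocks are forced to share a vertex. This requires a careful choice of overlap together with an isoperimetric/Menger argument in the cube; the rest of the proof (continuity, localization, coarse-graining, and LSS domination) is standard.
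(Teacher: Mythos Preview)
Your overall architecture matches the paper's proof: localize the infinite-cluster condition by replacing $I$ with a finite-range proxy, use Markov's inequality on the expected number of ``bad'' vertices in a box, declare boxes good/bad accordingly, apply Liggett--Schonmann--Stacey to the finite-range dependent block process, and then glue across adjacent good blocks. The continuity step and the reduction to $p>\pcc^S$ are also correct.

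The genuine gap is exactly the step you flag as the ``main obstacle'', and the Menger/crossing mechanism you sketch does not close it. Knowing that a good cube $B$ has fewer than $\eps\ell^{d-1}$ obstacles does give, via Menger, a path in $B\setminus T_n^F$ joining any prescribed pair of opposite faces. But two such crossings in adjacent overlapping blocks need not meet: already for $d=2$ two horizontal crossings can be parallel segments at different heights, and for $d\ge 3$ even crossings in perpendicular directions inside the same cube need not intersect. So ``the block overlap forces such crossings to share a common vertex'' is simply false as stated, and no choice of overlap rescues it without a further idea.

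The paper resolves this by replacing ``crossing'' with ``giant component''. With $R_n$ the set of vertices of $B_n$ connected to $\partial B_{2n}$, one has $\E_{\pc}|R_n|\le c2^d n^{d-1}$, and the key input is the discrete isoperimetric inequality of Bollob\'as--Leader in the cube: any connected subset $C\subseteq B_n$ with $|C|\le \tfrac45|B_n|$ satisfies $|\partial C\cap B_n|\ge K|C|^{(d-1)/d}$. Summing over components of $B_n\setminus R_n$ and comparing with $|R_n|$ shows that, with probability at least $1-8cd/K-2c/n$, some component of $B_n\setminus R_n$ has size $\ge\tfrac45|B_n|$. Call that event $L_n$ and colour that component green. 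Now take blocks $B_n(z)=B_n+nz$ with adjacency ``nonempty intersection'', so adjacent blocks overlap in exactly $\tfrac12|B_n|$ vertices. If $B_n(z)$ and $B_n(z')$ are both good, each contains a green set of size $\ge\tfrac45|B_n|$, and by pigeonhole at least $\tfrac15|B_n|$ vertices of the overlap are green for both; hence the green sets meet. This is the missing combinatorial device: a volume lower bound on a single component, not the existence of crossings, is what makes the gluing automatic.
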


\begin{proof}[Proof of Theorem \ref{gady-intro}]
When $d\geq 19$ and $S=F=0$, \eqref{assume0} is proved in \cite{KN11}.
\end{proof}

The one-arm critical exponent $\rho=\rho(d)$ is given by
\be\label{rho-def}
\P_{\pcc^S}^S(\rad(C) \ge n) \approx n^{-1/\rho},
\ee
where various interpretations
of the symbol $\approx$ are possible. See for example \cite[Sect.\ 9.1]{G99}
for a discussion of critical exponents and universality.  A relation of the form
\eqref{rho-def} is believed to hold in a wide variety of settings including
the spread-out models on $\Z^d$ for all $d\ge 2$.  It is known that $\rho(d)=\frac12$ for
the nearest-neighbour model with $d \ge 19$, and
for the sufficiently spread-out model in $7$ and more dimensions \cite{KN11}.  On the
other hand it is believed that $\rho(2)= \frac{48}5$ ($> 1$),
so that \eqref{assume0} is expected to fail in two dimensions, as indeed does the
conclusion of Theorem \ref{thm:koz}. It is proved in \cite{LSW02} (see also \cite{SW01}) that
$\rho$ exists for site percolation on
the triangular lattice and satisfies $\rho(2)=\frac{48}5$,
and in \cite[\S 5]{Kes87}  that $\rho(2) \ge 3$, or more specifically
\begin{equation}\label{bk}
\P_{\pc}(\rad(C) \ge n) \ge cn^{-1/3},
\end{equation}
for bond percolation on the square lattice.  The argument leading to \eqref{bk}
may be extended (using \cite[Thm 5.1]{Kes82} and \cite[eqn (6.56)]{G99})
to obtain a lower bound of order
$n^{-(d-1)/2}$ on $\L^d$ with $d \ge 2$ (for $d=2$ the bound is worse
than \eqref{bk}, but is valid also in the spread-out case).

The critical exponents $\rho$, $\eta$ are expected to satisfy
the scaling inequality $2\le \rho(d-2+\eta)$, with equality when $d \le 6$
(see \cite[Sect.\ 9.1]{G99} for the definition
of $\eta$ and a discussion of the scaling relations,
and \cite{Tas} for a proof of the inequality under reasonable assumptions).
Assuming the equality for $d \le 6$, it follows that
$\rho<1$  if and only if
\begin{equation}\label{g105}
\eta > 4-d.
\end{equation}
Numerical studies of \cite{BFMSPR,LZ} (see also
\cite[Table 4.1]{Hughes2})
suggest that \eqref{g105} fails
when $d=3$ but holds when $d=5$. The evidence when $d=4$, while less conclusive,
suggests that \eqref{g105} fails in this case also.
It thus seems possible that \eqref{assume0}  fails when $d=3,4$
and holds when $d=5$.

Theorem \ref{thm:koz} will be proved using a block argument based on a bound
for a typical cluster-radius.
A similar technique has been used recently in \cite{ADKS} to
study a forest-fire problem in seven and more dimensions,
also subject to the assumption  $\rho<1$.

\begin{proof}[Proof of Theorem \ref{thm:koz}] Consider 
first the unfattened nearest-neighbour model with $S=F=0$; at the end of
the proof, we indicate the necessary
changes for the general case.
Assume that \eqref{assume0} holds for $c>0$ and $n \ge N(c)$. Rather then
deleting the set of points in \emph{infinite} open paths, we shall delete a
larger set that is specified in terms of certain \emph{finite} connections. A
box argument will then be used to show the existence of an infinite component
in the remaining graph.

We specify first the set of points to be deleted. Let $n\in\N$, and write $B_n = (-n, n]^d \cap \Z^d$
and $\pd B_n = B_n \setminus B_{n-1}$.
Let $R_n$ be the set of vertices in $B_n$ joined by open paths to $\pd B_{2n}$,
and note that $R_n$ is a superset of the subset of $B_n$ containing
points lying in infinite open paths.
By \eqref{assume0},
\begin{equation}\label{K1}
\E_\pc|R_n| \le c(2 n)^d  n^{-1} =c2^d n^{d-1}.
\end{equation}
By Markov's inequality,
\begin{equation}\label{L2}
\P_\pc(|R_n| \ge \tfrac12 |B_n|) \le \frac{2c}n.
\end{equation}

Let $C_1,C_2,\dots,C_m$ be the components of $B_n$ after deletion of
$R_n$. Each vertex in the external boundary $\De C_i$ of some $C_i$ lies either outside
$B_n$ or in $R_n$. Each vertex of $R_n$ is in at most $2d$ such external boundaries.
Therefore,
\begin{equation}\label{K2}
2d |R_n| \ge \sum_{i=1}^m |\pdi C_i|,
\end{equation}
where $\pdi C := \De C \cap B_n$.
By \cite[Thm 8]{BL91}, there exists $K>0$ such that,
for any connected subset $C$ of $B_n$ with $|C| \le \frac45 |B_n|$,
we have $|\pdi C| \ge K |C|^{(d-1)/d}$.

Let $L_n$ be the event that there exists $i$ with $|C_i| \ge \frac45 |B_n|$,
and let $M_n = \{|R_n| < \frac12 |B_n|\}$. On the event $\comp{L_n} \cap
M_n$, by \eqref{K2},
\begin{align}\label{K3}
2d |R_n| &\ge K\sum_{i=1}^m |C_i| ^{(d-1)/d}\\
&\ge K \left( \sum_{i=1}^m |C_i| \right)^{(d-1)/d}\nonumber\\
&= K\left(|B_n| - |R_n|\right)^{(d-1)/d}
\ge \tfrac12 K |B_n|^{(d-1)/d}.
\nonumber
\end{align}
We take expectations to obtain by \eqref{K1} that
\begin{align*}
c2^dn^{d-1} &\ge \E_\pc\bigl(|R_n| ;\comp {L_n}\cap M_n\bigr)\\
&\ge \tfrac14(K/d) (2n)^{d-1} \P_\pc(\comp{L_n}\cap M_n).
\end{align*}
By \eqref{L2},
\begin{equation}\label{K4}
\P_\pc(L_n) \ge 1- \frac {8cd}K - \frac{2c}n.
\end{equation}
On the event $L_n$, we pick a $C_i$ of largest size, and we colour its
vertices \df{$0$-green}.

Inequality \eqref{K4} may be combined with a block argument to show the
existence of an infinite component in $X$, when $c$ and $n$ are chosen
suitably. For $z\in\Z^d$, define the block $B_n(z) := B_n + nz$. Two blocks
are designated \df{adjacent} if and only if they have non-empty intersection.
Let $R_n(z)$ and $L_n(z)$ be given as above but relative to the block
$B_n(z)$. The block $B_n(z)$  is called \df{good} if  $L_n(z)$ occurs. On
$L_n(z)$, we pick a largest component in the complement of $R_n(z)$, and
colour its vertices \df{$z$-green}. Let  $\Gamma$ be the set of all vertices
that are $z$-green for some $z$.

It may be seen that the set of random block-colours is a
$3$-dependent family of random variables
(see \cite{LSS} for a definition of $k$-dependence). 
By \cite[Thm 0.0]{LSS}, there
exists $\tpc\in(0,1)$ such that any $3$-dependent site percolation model on
$\Z^d$ with all site-marginals exceeding $\tpc$ has (a.s.)\ an infinite
cluster. Choose $c>0$ and $n \ge N(c)$ such that
\begin{equation*}
1- \frac{8cd}K - \frac{2c}n > \tpc.
\end{equation*}
By \eqref{K4} and the continuity in $p$ of $\P_p(L_n)$, we may find
$p>\pc$ such that $\P_p(L_n) > \tpc$. There exists, $\P_p$-a.s., an infinite connected
component  of good blocks.

Let $z,z'\in \Z^d$ be adjacent, and note that the intersection of $B_n(z)$ and $B_n(z')$ has cardinality $\tfrac12 |B_n|$.
Suppose $B_n(z)$ and $B_n(z')$ are both good.
Since each contains a component of green
vertices of size at least $\frac45 |B_n|$, at least $\frac 15$ of the vertices
in the intersection $B_n(z) \cap B_n(z')$ are both $z$-green and $z'$-green.
Therefore, the green sets in $B_n(z)$ and $B_n(z')$ have non-empty intersection.
It follows that there exists, $\P_p$-a.s., an infinite component in
$\Gamma$. Since no vertex of $\Gamma$ lies in an infinite
open cluster, the theorem is proved.

Essentially the same proof is valid for general $S$, $F$.
The set $R_n$ is defined as the set of $x \in B_n$ such that there
exists $y$ with $\|y-x\|\le F$ and $y$ is joined by an open path
to some vertex in $\Z^d \setminus B_{2n+2F-1}$. Equations
\eqref{K1}--\eqref{L2} are replaced by
\begin{align*}
\E^S_{\pcc^S}|R_n| &\le c(2n+2F)^d n^{-1},\\
\P_{\pcc^S}^S(|R_n| \ge \tfrac12 |B_n|) &\le \frac{2c}n \left(1+ \frac Fn\right)^d,
\end{align*}
and the proof then proceeds as before.
\end{proof}

\begin{thm}\label{pfin_less}
Consider bond percolation on the lattice $\L^d$ with
$d \ge 3$. We have that $\pfin < 1$.
\end{thm}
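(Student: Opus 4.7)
The plan is to run a Peierls-style estimate on open plaquettes: by Lemma~\ref{topolem}, an infinite component of $X$ containing the origin forces the existence of arbitrarily large open plaquette surfaces separating $0$ from infinity, an event of vanishing probability when $q:=1-p$ is small.

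Fix $p$ close to $1$ and suppose that $0$ lies in an infinite connected component $D$ of $X$. For each $n\ge 1$, connectivity of $D$ in $\L^d$ furnishes a lattice path $\gamma$ from $0$ to $\pd B_n$ with every vertex in $X$. Set $U:=\bigcup_{v\in\gamma}C(v)$, where $C(v)$ denotes the (finite) open cluster at $v$. Then $U$ is a finite, connected subset of $\Z^d$ containing $0$ with $|U|\ge n$: finite because $\gamma$ has finitely many vertices and each $C(v)$ is finite, and connected because the $\L^d$-path $\gamma$ joins consecutive clusters $C(v)$.

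The crucial structural step is to show that $\Pi(U)$ is a surface of \emph{open} plaquettes that separates $0$ from infinity. Lemma~\ref{topolem} gives that $\Pi(U)$ is a surface, and since $U$ is finite with $0\in U$, this surface separates $0$ from infinity. For any $\pi(\la x,y\ra)\in\Pi(U)$ with $x\in U$ and $y\in\De U$ we have $y\notin U\supseteq C(x)$, so $y$ lies either in the infinite open cluster or in a finite cluster distinct from $C(x)$; either way $\la x,y\ra$ is closed and $\pi(\la x,y\ra)$ is open. The standard edge-isoperimetric inequality on $\Z^d$ yields $|\Pi(U)|=|\Dee U|\ge c_d|U|^{(d-1)/d}\ge c_d n^{(d-1)/d}$.

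A standard Peierls count (via an anchor plaquette, using the bounded degree of the plaquette adjacency graph) bounds the number of surfaces of size $k$ separating $0$ from infinity by $K^k$ for some $K=K(d)$, and each such surface is open with probability $q^k$. Hence for $p$ close enough to $1$ that $Kq<1$,
\[
\P_p\bigl(0\in\text{infinite component of }X\bigr) \;\le\; \sum_{k\ge c_d n^{(d-1)/d}}(Kq)^k \;\longrightarrow\; 0 \quad (n\to\infty),
\]
using $(d-1)/d>0$ when $d\ge 3$. Translation invariance and ergodicity then give $\P_p(\CT)=0$, so $\pfin\le p<1$. The main obstacle is the structural claim that every plaquette of $\Pi(U)$ is open: it hinges on the observation that any edge leaving $U$ necessarily joins two distinct clusters (one possibly infinite) and is therefore closed. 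The Peierls count and edge-isoperimetric bound are routine.
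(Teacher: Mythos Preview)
Your proof is correct and takes a genuinely different route from the paper's. You argue directly in the dual plaquette lattice: if $0$ lies in an infinite component of $X$, then the union $U$ of finite open clusters along a path to $\partial B_n$ yields an open surface $\Pi(U)$ of size at least $c_d n^{(d-1)/d}$ surrounding $0$, and a Peierls count on open plaquette surfaces kills this for $q$ small. The paper instead works with \emph{primal} plaquettes of $\Z^d$ declared ``good'' when every incident edge is open; it uses subcritical percolation together with Lemma~\ref{topolem} to show that product plaquette measure with density above $1-\pc$ a.s.\ produces separating surfaces, and then invokes the Liggett--Schonmann--Stacey domination theorem \cite{LSS} to transfer this to the dependent field of good plaquettes. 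Your argument is more elementary and self-contained (no stochastic domination machinery), while the paper's yields slightly more: the separating surface it constructs actually lies inside the infinite open cluster, not merely in its edge-boundary.

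One small point you gloss over: the inequality $|\Dee U|\ge c_d |U|^{(d-1)/d}$ is the edge-isoperimetric inequality for the \emph{external} boundary, whereas the standard statement is for the full edge-boundary. Since $U$ may have holes, you should pass to the filled set $\ol U$ (as in the proof of Lemma~\ref{topolem}), for which $\Dee U=\Dee\ol U$ equals the full edge-boundary, and then apply isoperimetry there. Similarly, the anchor in your Peierls count sits at distance at most $Ck^{d/(d-1)}$ along an axis (since the region enclosed by a $k$-plaquette surface has at most $Ck^{d/(d-1)}$ vertices), which is what makes the $K^k$ bound work. Both are routine fixes.
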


The proof below may be made quantitative, in that it provides a calculable
bound $p'<1$ with $\pfin\le p'$. This bound is however too imprecise to be
interesting.

\begin{proof}[Proof of Theorem \ref{pfin_less}]
The idea is as follows.
When $p$ is sufficiently close to $1$,
not only is there an infinite open cluster,
but this cluster is `fat' in the sense that it separates
the origin from infinity. In order to construct the
required cut-surface, we shall first use Lemma \ref{topolem} to
show that the origin is a.s.\ separated from infinity
by a surface in $\L^d$ with the property that every constituent edge is open.

Let $0 <\a < \pc$, and consider bond percolation
on $\L^d$ with parameter $\a$. Let $A \subset \Z^d$
be finite and connected, and let $C=C_A$ be the set of all vertices reached from $A$ by open paths.
Since
the percolation process is subcritical,
$C$ is (a.s.) finite and connected.
By Lemma \ref{topolem}, $\Pi(C)$
is a surface that separates $C$ (and hence $A$ also) from infinity.

Interchanging the roles of $\Z^d$ and $\zz^d$, the above conclusion may be
restated as follows. Suppose that each plaquette of $\Z^d$ is independently
declared \df{occupied} with probability $q$.  If $q>1-\pc$, a.s.\ every
finite subset of $\zz^d$ is separated from $\infty$ by a finite occupied
surface of plaquettes of $\Z^d$.  If an infinite path $\nu$ of
$\zz^d$ contains an edge dual to some plaquette $\pi$, the shifted path
$\nu +h$ contains a vertex incident to $\pi$.  Therefore, if $q>1-\pc$,
every finite subset of $\Z^d$ is separated from $\infty$ by a finite occupied
surface of plaquettes of $\Z^d$.

Returning to bond percolation on $\E^d$, call a plaquette of $\Z^d$ \df{good}
if every incident edge of $\E^d$ is open. Write $\g(\pi)=1$ (\resp,
$\g(\pi)=0$) if $\pi$ is good (\resp, not good). The set of plaquettes of
$\Z^d$ may be considered as the vertex-set of a graph $G$ with adjacency
relation $\rsim 1$ given in Section~\ref{sec:not}. The vector
$\g=(\g(\pi):\pi\in \Pi+h)$ is $1$-dependent in that, for any subsets
$\Pi_1$, $\Pi_2$ of $\Pi+h$ separated by graph-theoretic distance at least
$2$, the sub-vectors $(\g(\pi): \pi\in \Pi_1)$ and $(\g(\pi): \pi\in \Pi_2)$
are independent.  Let $p' \in (1-\pc,1)$. By \cite[Thm 0.0]{LSS} (see
also \cite[Thm 7.65]{G99}), there exists $p''<1$ such that: when $p>p''$,
the law of $\g$ stochastically dominates product measure with parameter $p'$.

\pagebreak
Let $x \in \Z^d$. Let $E_x$ be the event that $x$ is separated from infinity
by some finite surface of good plaquettes of $\Z^d$, and that in addition
every vertex incident with this surface lies in the (a.s.)\ unique infinite
open cluster. By the above, for $p > p''$,
\begin{equation*}
1-\P_p(E_0) \le \P_{p'}(\comp{S_n}) + \P_p(B_n \nlra\oo)
\to 0\qquad\text{as } n\to\oo,
\end{equation*}
where $S_n$ is the event that there exists a finite open surface of plaquettes of $\Z^d$
separating $B_n := (-n,n]^d \cap \Z^d$ from infinity.
Therefore, $\P_p(E_0)=1$. By translation-invariance,
\begin{equation*}
\P_p\Bigl(\bigcap_x E_x\Bigr) =1.
\end{equation*}
On the last event, every infinite path from $X$ intersects the
infinite open cluster. It follows that $X$ has a.s.\ no infinite component for $p''< p<1$.
Therefore, $\pfin\le p''$ as required.
\end{proof}

\begin{thm}\label{bk-thm}
Let $p\in[0,1]$. The set $X$ has either a.s.\ no infinite component or a.s.\ a
unique infinite component.
\end{thm}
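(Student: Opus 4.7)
The plan is to mimic the Burton--Keane uniqueness argument, exploiting the monotonicity that closing edges of $\om$ shrinks the infinite cluster $I(\om)$ and hence enlarges $X(\om)$: if $\om'\le\om$ pointwise, then $I(\om')\subseteq I(\om)$, and $X(\om)$ sits as a subgraph of $X(\om')$, with $V(X(\om))\subseteq V(X(\om'))$ and every edge of $X(\om)$ also an edge of $X(\om')$. Since $\P_p$ is a product measure it is ergodic under $\Z^d$-translations, and the number $N$ of infinite components of $X$ is translation invariant, hence a.s.\ equal to some constant $k\in\{0,1,\dots,\oo\}$. It therefore suffices to rule out $k\ge 2$.

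Consider first the case $2\le k<\oo$. Let $A_n$ denote the event that at least two infinite components of $X$ meet $B_n$; since $A_n\uparrow\{N\ge 2\}$, we have $\P_p(A_n)>0$ for some $n$. Let $F_n$ be the event that every edge of $\E^d$ incident to $B_n$ is closed, and let $M_n:\Om\to F_n$ be the map that forces all $B_n$-incident edges closed and leaves the other edges unchanged; the pushforward $(M_n)_*\P_p$ equals $\P_p(\cdot\mid F_n)$. Under $M_n(\om)$ each vertex of $B_n$ is a singleton of the percolation, so $B_n\subseteq V(X(M_n(\om)))$ and $B_n$ is a connected subgraph of $X(M_n(\om))$ via the $\L^d$-edges among its vertices. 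By the monotonicity above, every infinite component of $X(\om)$ persists as an infinite subgraph of $X(M_n(\om))$; on $A_n$ the two components meeting $B_n$ are joined through $B_n$. The vertices that newly enter $X$, namely $I(\om)\sm I(M_n(\om))$, are finite pieces of the original infinite cluster cut off by the closed $B_n$-incident edges, so they cannot create new infinite components. Hence $N(M_n(\om))\le k-1$ on $A_n$, giving $\P_p(N\le k-1)\ge \P_p(F_n)\cdot\P_p(A_n)>0$, in contradiction with $N=k\ge 2$ a.s.

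The case $k=\oo$ is the main obstacle, since the merging argument only reduces $\oo$ to $\oo$. Here I would adapt the Burton--Keane trifurcation argument. Call $v\in X$ a \emph{trifurcation} of $X$ if $v$ lies in an infinite $X$-component whose removal leaves at least three infinite pieces; a standard tree-counting argument bounds the number of trifurcations of $X$ inside $B_n$ by $c|\pd B_n|=O(n^{d-1})$. It suffices then to show that $\P_p(0\text{ is a trifurcation of }X)>0$, for by translation invariance this gives an expected trifurcation count of order $n^d$ in $B_n$, contradicting the $O(n^{d-1})$ bound. To produce this, take $n$ so that with positive probability at least three distinct infinite $X$-components meet $B_n$, pick one vertex in each, connect them to the origin by three edge-disjoint and pairwise well-separated $\L^d$-paths inside $B_n$, and close every $B_n$-edge incident to the union of these paths but not lying on the paths themselves. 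Monotonicity keeps the three chosen components infinite, and the spacing between the paths ensures that the finite chunks of new $X$-vertices liberated alongside distinct paths do not bridge them, making the origin a trifurcation.

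The most delicate point is precisely this separation in the trifurcation step: closing edges can liberate finite pieces of the former infinite cluster into $X$, and in principle such a piece might bridge two of the three chosen infinite components outside the origin and destroy the trifurcation structure. This can be controlled by choosing the three paths at pairwise $\L^d$-distance at least $3$ inside a large $B_n$ and, if necessary, insulating each path with a surrounding shell of forced-closed edges.
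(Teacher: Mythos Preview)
Your overall strategy and the treatment of the case $2\le k<\oo$ are correct and coincide with the paper's: both exploit that closing a finite set of edges enlarges $X$ by only a \emph{finite} set of vertices (this is exactly the paper's Lemma~\ref{modify}), so merging two components through $B_n$ strictly decreases $N$.

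The gap is in your trifurcation step for $k=\oo$. You correctly identify the obstruction --- closing edges may liberate finite chunks of the former infinite cluster into $X$ --- but your proposed fixes do not neutralise it. The liberated pieces are finite but of \emph{uncontrolled} size: a chunk $L$ freed near path $\pi_1$ can extend far from all three paths and be $\L^d$-adjacent to two of the original infinite $X$-components $C_1$, $C_2$ (nothing in the original configuration prevents a piece of $I$ from being $\L^d$-adjacent to several components of $X$). Then $C_1$ and $C_2$ are joined in the modified $X$ via $L$, bypassing the origin entirely, and the origin is no longer a cut-vertex. Taking the paths at mutual distance~$3$ or wrapping them in closed shells only controls adjacencies \emph{near the paths}; it says nothing about bridges between the $C_i$ occurring far away through $L$. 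So $\P_p(0\text{ is a trifurcation})>0$ is not established.

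The paper sidesteps this difficulty. Rather than forcing a trifurcation \emph{point}, it observes that after merging three components by closing finitely many edges, only finitely many vertices have been added to $X$; hence the merged component, being the union of (at least) three originally disjoint infinite components together with a finite set, has at least three ends. The contradiction then comes from Lemma~\ref{ends}: a translation-invariant random subgraph of $\Z^d$ a.s.\ has no component with more than two ends. That lemma is itself a Burton--Keane-type counting argument, but one that requires only translation invariance, not the finite-energy surgery you are attempting. The moral is that ``$\ge 3$ ends'' is the right invariant to produce here, not ``origin is a trifurcation point''; the latter is strictly harder and seems to genuinely require control you do not have.
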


The proof of Theorem~\ref{bk-thm} will be an adaptation of the celebrated
argument of Burton and Keane \cite{BK}, and will proceed via the next two
lemmas.  The first describes the behaviour of the set $X$ under modifications
to the percolation configuration (however, it is easily seen that $X$ does \emph{not}
possesses the so-called `finite-energy property').  The second
conveniently encapsulates a sufficiently general consequence of the
`encounter point' argument of \cite{BK}.

For a percolation configuration $\omega\in\{0,1\}^{\E^d}$ and an edge
$e\in\E^d$, we write $\omega^e$ (\resp, $\omega_e$) for the
configuration that agrees with $\omega$ on $\E^d\setminus\{e\}$ and has
$\omega^e(e)=1$ (\resp, $\omega_e(e)=0$).

\begin{lemma}\label{modify}
For any configuration $\omega$ and any edge $e$ we have
$$X(\omega_e)=X(\omega^e)\cup F$$
for some (possibly empty) finite set $F=F(\omega,e)\subset\Z^d$.
\end{lemma}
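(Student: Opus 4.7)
The plan is to compare the two configurations $\omega^e$ and $\omega_e$ and show that closing the edge $e=\la u,v\ra$ can add only a single finite open cluster to the set $X$.

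First I would note the trivial inclusion $X(\omega^e)\subseteq X(\omega_e)$: closing an edge cannot enlarge any open cluster, so a vertex lying in a finite cluster of $\omega^e$ still lies in a (possibly smaller) finite cluster of $\omega_e$. Thus the whole content of the lemma is that the excess $F:=X(\omega_e)\setminus X(\omega^e)$ is finite.

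Next I would do a case analysis on what happens to the cluster $C(u)$ of $u$ in $\omega^e$ upon closing $e$. If there is still an open path from $u$ to $v$ in $\omega_e$, then every open cluster is unchanged and $X(\omega_e)=X(\omega^e)$, whence $F=\es$. Otherwise $C(u)$ splits in $\omega_e$ into exactly two open clusters, one containing $u$ and one containing $v$, and $C(u)=C_u\cup C_v$ where $C_u$, $C_v$ denote the open clusters of $u$, $v$ in $\omega_e$. All other open clusters are unchanged, so only vertices of $C(u)$ can possibly belong to $F$. If $C(u)$ was finite in $\omega^e$, then $C_u,C_v$ are finite too and no vertex changes status, giving $F=\es$. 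If $C(u)$ was infinite in $\omega^e$, then $C(u)\cap X(\omega^e)=\es$, and a vertex of $C(u)$ enters $X(\omega_e)$ precisely when it lies in one of $C_u, C_v$ that has become finite; at most one of $C_u,C_v$ can be finite (since their union is infinite), so $F$ is contained in a single open cluster of $\omega_e$ which is finite by construction.

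Combining the cases gives $F\subseteq C_u\cup C_v$ with $F$ finite, as required. I do not anticipate any serious obstacle: the argument is essentially the observation that closing a single edge either preserves the cluster partition or splits exactly one cluster into two, and in the latter situation at most one of the resulting pieces is newly finite.
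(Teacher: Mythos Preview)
Your proof is correct and follows essentially the same approach as the paper: both arguments analyse what happens to the open cluster containing $e$ in $\omega^e$ when $e$ is closed, observing that either nothing relevant changes or this cluster splits into an infinite piece and a finite piece $F$. Your version is somewhat more detailed, making the inclusion $X(\omega^e)\subseteq X(\omega_e)$ explicit and treating separately the sub-cases where the cluster is already finite or splits into two infinite pieces, whereas the paper compresses these into a single line.
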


\begin{proof}
The edge $e$ lies in some open cluster $C$ of $\omega^e$.  When the state of $e$
is changed from open to closed,
either the set of infinite open clusters is unchanged (in
which case we take $F=\emptyset$), or $C$ breaks into an infinite and a finite
cluster (in which case we take $F$ to be the  vertex-set of the latter).
\end{proof}

The number of \df{ends} of a connected graph is the supremum over its finite
subgraphs of the number of infinite components that remain after removing the
subgraph.  The following is proved (in the greater generality of amenable
transitive graphs) in \cite[remark following Cor.~5.5]{blps-group}; see
also \cite[Ex.~7.28]{lyons-peres-book}.

\begin{lemma}\label{ends}
Let $H$ be a random subset of $\E^d$ that is invariant in law under
translations of $\Z^d$.  Then, almost surely, no component of $H$ has more than $2$ ends.
\end{lemma}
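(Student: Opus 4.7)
My plan is to adapt the classical Burton--Keane trifurcation argument \cite{BK} to a general translation-invariant edge-set on the amenable lattice $\Z^d$. Assume for contradiction that, with positive probability, some component of the graph $(\Z^d, H)$ has at least three ends. Call a vertex $v \in \Z^d$ a \emph{trifurcation} if its component $C$ in $(\Z^d, H)$ satisfies that $C \setminus \{v\}$ has at least three infinite components. Let $T \subseteq \Z^d$ denote the set of trifurcations. Since the law of $H$ is translation-invariant, the density $\alpha := \P(0 \in T)$ is well-defined, and the goal is to derive $\alpha > 0$ from the hypothesis and then contradict it via a boundary count.

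The first step, and the main obstacle, is to show $\alpha > 0$. In Burton--Keane's original argument, finite-energy modifications near the origin force $0$ to be a trifurcation with positive probability. Here $H$ is only assumed translation-invariant in law, with no finite-energy hypothesis, so this route is unavailable. I would instead use an invariant mass-transport argument on $\Z^d$, as in \cite{blps-group}: for each component $C$ with three or more ends, define a translation-equivariant rule that sends a unit of mass from each $v \in C$ toward a canonically-chosen terminal vertex in a designated end-direction. The mass-transport principle on the amenable graph $\Z^d$ forces expected out-mass to equal expected in-mass at each vertex; since mass is sent out toward three distinct ends, balance can only occur at a positive density of branching vertices, that is, at trifurcations.

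The second step is a deterministic boundary count. Fix $n$ and let $B_n = (-n,n]^d \cap \Z^d$. For each trifurcation $v \in T \cap B_n$, the three branches of $C \setminus \{v\}$ each contain an infinite ray, which must exit $B_n$ at some vertex of its outer boundary $\partial B_n$. A standard inductive argument, based on the tree-like combinatorics of these disjoint branches (each internal vertex has degree at least $3$), shows that $|T \cap B_n| + 2 \leq |\partial B_n| = O(n^{d-1})$. Combining with $\E|T \cap B_n| = \alpha |B_n| \asymp \alpha n^d$ via translation invariance and linearity of expectation yields $\alpha \leq O(1/n)$ for all $n$, hence $\alpha = 0$, contradicting the first step.
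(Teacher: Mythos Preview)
The paper does not prove this lemma; it simply cites \cite{blps-group} and \cite{lyons-peres-book}. So your self-contained attempt is welcome, and your overall plan (trifurcations plus Burton--Keane counting against the amenability of $\Z^d$) is the right one. Step~2 is correct as stated.

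The gap is in Step~1. You define a trifurcation as a \emph{single vertex} $v$ whose removal leaves at least three infinite pieces of its cluster. But a graph with three or more ends need not contain any such vertex: for instance, take three copies of $\N$ and glue their endpoints to the three corners of a triangle; this graph has three ends, yet removing any single vertex leaves at most two infinite components. You correctly note that finite energy is unavailable, and you appeal to a mass-transport argument to manufacture $\alpha>0$; but the transport you describe (``send a unit of mass toward a canonically-chosen terminal vertex in a designated end-direction'') is not well-defined, since an end has no terminal vertex, and the claimed conclusion (``balance can only occur at a positive density of branching vertices'') does not follow from the mass-transport principle in any way I can see. As written, Step~1 does not establish $\alpha>0$, and indeed $\alpha>0$ may simply be false for single-vertex trifurcations.

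The standard repair is to work with \emph{$r$-ball trifurcations}: call $v$ an $r$-trifurcation if removing $B_r(v)$ from the cluster of $v$ leaves at least three infinite components. If the cluster of the origin has at least three ends with positive probability, then directly from the definition of ends (no mass transport needed) there is a fixed $r$ with $\P(0\text{ is an $r$-trifurcation})>0$. One then runs the Burton--Keane count of Step~2 after restricting attention to $r$-trifurcations located on the sublattice $(2r+1)\Z^d$, so that the balls $B_r(v)$ are pairwise disjoint; the tree-counting bound then gives $|T\cap B_n\cap (2r+1)\Z^d|\le |\partial B_{n+r}|$, while translation invariance gives expectation of order $n^d$, yielding the desired contradiction.
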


\begin{proof}[Proof of Theorem~\ref{bk-thm}]
Let $N$ be the number of infinite components of $X$.  Since $N$ is a
translation-invariant function of a collection of independent random
variables, it is a.s.\ equal to some constant $n$.

We first show that $n\in\{0,1,\infty\}$.  Suppose on the contrary that
$1<n<\infty$.  There exist $x,y\in\Z^d$ that lie in
distinct infinite components of $X$ with positive probability.  Let $U$ be
the vertex set of a finite path in $\Z^d$ connecting $x$ and $y$.  On the
event mentioned above, modify the configuration $\omega$ by making every edge
incident to $U$ closed.  In the modified configuration $\omega'$, all
vertices of $U$ lie in $X(\omega')$.  By Lemma~\ref{modify}, $X(\omega')\supseteq
X(\omega)$, so $X(\omega')$ has an infinite component that contains the
original infinite components of $x$ and $y$.  By Lemma~\ref{modify} again,
$X(\omega')\setminus X(\omega)$ is finite, so no new infinite components have
been created, and thus $X(\omega')$ has strictly fewer that $n$ infinite
components. Since the modification involved only a fixed finite set of edges,
it follows that $X(\omega)$ has fewer than $n$ components with positive
probability, a contradiction.

We employ a similar argument to eliminate the possibility $n=\infty$.  If
$n=\infty$, there exist $x,y,z \in \Z^d$ that lie in distinct
infinite components of $X$ with strictly positive probability.
On this event, we can modify $\omega$ by making
a finite set of edges closed in such a way that $x$, $y$ and $z$ now lie in a single
infinite component of $X$.  By Lemma~\ref{modify}, only finitely many
vertices are added to $X$ in this process (and none are removed), so the
resulting component has at least $3$ ends.  Therefore $X$ has a component
with at least $3$ ends with positive probability, in contradiction of
Lemma~\ref{ends}.
\end{proof}

\section{On regular trees}\label{ssec:trees}

We consider briefly the question of $\pfin$ for percolation
on a regular tree. Rather than the $(b+1)$-regular tree, for convenience we work
with the rooted tree $\T_b$ all of whose
vertices other than the root (denoted $0$) have degree $b+1$; the root has degree $b$.
Let
$$
\rho_b(p) := \P_p(X \mbox{ possesses an infinite cluster}).
$$
Since $X$ is decreasing in the natural coupling of the processes
as $p$ varies, $\rho_b$ is a non-increasing function. As before,
$\rho_b$ takes only
the values $0$ and $1$, and the two `phases' are
separated by the critical value
$$
\ptree = \inf\{p: \rho_b(p)=0\}.
$$
The following theorem is based on an elementary calculation using the
theory of branching processes. The result is implicit in \cite[Sect.\ 3.3]{JJ99},
and an extension is found at \cite[Thm 2.2]{GJ05}.

\begin{thm}\label{tree}
Let $b \ge 2$. Then
$$
\ptree = \frac{b^{b/(b-1)}-b}{b^{b/(b-1)}-1}, \quad\text{and}\quad \rho_b(\pfin)=0.
$$
\end{thm}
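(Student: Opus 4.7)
The plan is a branching-process analysis exploiting the tree structure of $\T_b$. Since $\T_b$ is a tree, $X$ has an infinite component if and only if some vertex is the top of an infinite descending ray whose every vertex lies in $X$; pushing such a ray upward through consecutive open ray-edges, I may assume its starting vertex $v$ satisfies either $v=0$ or the edge from $v$ to its parent is closed --- call such a $v$ \emph{Type~1}. Let $\eta=\eta(p)$ denote the extinction probability of the Galton--Watson tree with $\mathrm{Bin}(b,p)$ offspring, i.e.\ the smallest root in $[0,1]$ of $s=(1-p+ps)^b$; so $\eta<1$ exactly when $p>\pc=1/b$. Let $r=r(p)$ denote the probability that a Type~1 vertex has an infinite descending $X$-ray inside its own subtree.

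Condition on $v\in X$ --- an event of probability $\eta$. Then the open cluster $C_v$ of $v$ inside its subtree $T_v$ is a.s.\ finite, and $v$ together with every vertex of $C_v$ belongs to $X$. Call the children in $T_v$ of $C_v$-vertices that themselves lie outside $C_v$ the \emph{exits} of $C_v$; a count of child-slots gives that the number of exits is $M=(b-1)|C_v|+1$. The key structural observations are that (a) each exit $c$ is itself Type~1 with respect to the subtree $T_c$; (b) conditional on $C_v$, the subtrees past distinct exits are independent and each retains its original $\P_p$-law; and (c) any infinite descending $X$-ray from $v$ must pass through some exit and continue as an infinite $X$-ray from there (since $C_v$ is finite), while any such continuation extends back to an infinite $X$-ray from $v$ through $C_v$. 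Hence, writing $y=1-r$ and $G(s)=\E[s^M\mid v\in X]$,
\[
y=F(y):=1-\eta+\eta\,G(y).
\]

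Now $F$ is increasing and convex on $[0,1]$ with $F(1)=1$ and $F(0)=1-\eta<1$, so $y=1$ (i.e.\ $r=0$) is always a fixed point and a smaller one in $[0,1)$ exists if and only if $F'(1)=\eta\,\E[M\mid v\in X]>1$. Conditional on $v\in X$, $C_v$ is generated by a Galton--Watson process in which each of $b$ children of an internal vertex is independently an exit (probability $\lambda=(1-p)/q$) or further internal (probability $\mu=p\eta/q$), where $q=1-p+p\eta$; this is the dual sub-critical process with $b\mu=f'(\eta)<1$ for $p>\pc$, so $\E[M\mid v\in X]=b\lambda/(1-b\mu)=b(1-p)/(1-p-p\eta(b-1))$. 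The criterion $F'(1)>1$ then simplifies algebraically to $\eta>(1-p)/(b-p)$. Hence $\ptree$ is characterised by $\eta(\ptree)=(1-\ptree)/(b-\ptree)$; combining with $\eta=q^b$ and $q=1-p+p\eta$ forces $q=b\eta$, so $\eta=(b\eta)^b$ and $\eta(\ptree)=b^{-b/(b-1)}$, and substituting back yields the stated formula for $\ptree$. At $p=\ptree$, $F'(1)=1$ and $F$ is convex, so $y=1$ is the unique fixed point in $[0,1]$; therefore $r=0$, whence $\rho_b(\ptree)=0$.

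The main obstacle is the conditional-independence claim in the second paragraph --- that conditioning on $\{v\in X\}$ leaves the subtrees past the exits of $C_v$ mutually independent and each with its original $\P_p$-law. This relies on the factorisation of $\{v\in X\}$ over the $b$ child-subtrees of $v$ as an intersection of independent ``this subtree does not carry $v$ to $\infty$'' events, so the conditioning acts independently on each branch; the exploration of $C_v$ lives inside these events, while the subtrees past the exits are untouched and retain their unconditioned law --- a phenomenon specific to trees that would fail on $\Z^d$.
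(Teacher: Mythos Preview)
Your proof is correct and follows essentially the same route as the paper's: both set up an embedded branching process whose individuals are the boundary edges (your ``exits'') of finite clusters, identify the offspring generating function as $1-\eta+\eta\,sT(s^{b-1})$ (your $F$), reduce survival to the mean condition $\eta\E[M\mid v\in X]>1$, and simplify this to $\eta>(1-p)/(b-p)$ before solving for equality. The only cosmetic differences are that the paper computes $\E[M]$ via the total-progeny generating function $T$ and its functional equation $T(s)=sH(T(s))$, whereas you compute it directly from the dual subcritical mean $b\mu=G'(\eta)$; and for $\rho_b(\pfin)=0$ the paper cites the extinction of a critical branching process while you give the equivalent fixed-point/convexity argument.
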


In particular, $\pfin(\T_2) = \frac23$, and
$$
\ptree = (1+\o(1))\frac1b\log b\qq\text{as } b \to\oo.
$$

We outline below a different proof from those of \cite{GJ05,JJ99}. The
following additional consequence is explained after the proof:
\begin{equation}\label{extra1}
\P_{\pfin-\eps}\bigl(\text{the $X$-component of $0$ is infinite}\bigr)  = c_b \eps + \O(\eps^2)
\end{equation}
as $\eps\downarrow 0$, for some $c_b \in (0,\oo)$ which may
be computed explicitly.

\begin{proof}[Proof of Theorem \ref{tree}]
The bond percolation cluster $C$ at the root amounts to a branching
process with family-sizes distributed as $\bin(b,p)$ and
(probability) generating function $G(s)=(1-p+ps)^b$. We assume $p>1/b$,
since $X=T$ a.s.\ otherwise.
Conditional on extinction, the family-size generating function is
$H(s) = G(s\eta)/\eta$ where
the probability $\eta=\eta(p)$ of extinction is the smallest
non-negative root of $G(s)=s$.

Consider a branching process with
generating function $H$. The generating function $T(s)$ of
the total size satisfies
\be
T(s) = sH(T(s)),
\label{v-1}
\ee
so that $T'(1) = H(1) + H'(1)T'(1)$, and hence
$$
T'(1) = \frac 1{1-H'(1)} = \frac 1{1-G'(\eta)}.
$$

Let $D$ be a finite connected subgraph of $T$,
rooted at some vertex $v$ and with the property that every vertex
other than $v$ has generation number
strictly greater than that of $v$. A \df{boundary edge} of $D$ is an edge
$\la x,y\ra$ of $T$ such that $x\in D$, $y \notin D$, and $y$ is a child of $x$.
If $D$ is \emph{infinite}, it is considered
to have \emph{no} boundary edges.

Returning to the original $\bin(n,p)$ branching process, we consider
the embedded branching process of boundary edges
of rooted finite clusters. The cluster $C$ at the origin has
$1+|C|(b-1)$ boundary-edges if finite (and $0$ if infinite),
with generating function
\be
K(s) = 1-\eta + \eta sT(s^{b-1}).
\label{v0}
\ee
A branching process with generating function $K$
survives with strictly positive probability if and only if
$K'(1)>1$, which is to say that
$$
\eta [1+(b-1)T'(1)] > 1,
$$
or equivalently
\be
G'(\eta) > \frac{1-b\eta}{1-\eta}.
\label{v1}
\ee

Since $G(s) = (1-p+ps)^b$,
\be
G'(\eta) = bp(1-p+p\eta)^{b-1} = \frac{bp\eta}{1-p+p\eta},
\label{v5}
\ee
and \eqref{v1} becomes
\be
\eta>\frac{1-p}{b-p}.
\label{v2}
\ee

Suppose that \eqref{v2} were to hold with equality. Since $\eta=G(\eta)$,
$$
\left(1-p+\frac{p(1-p)}{b-p}\right)^b = \frac{1-p}{b-p},
$$
which may be solved to find that
$$
p=\frac{b^{b/(b-1)}-b}{b^{b/(b-1)}-1}.
$$
It follows that \eqref{v2} holds if and only if $p$ is
strictly smaller than the above value.

In summary, the non-percolating part of the
original tree percolates if and only if
$$
p < \frac{b^{b/(b-1)}-b}{b^{b/(b-1)}-1}.
$$
That $\rho(\pfin)=0$ follows from the fact that a
critical branching process with non-zero variance dies out almost surely.
\end{proof}

Let $\kappa(p)$ be the probability that the root lies in an infinite component of $X$.
By an elementary analysis based on the above, one may compute the critical
exponent of $\kappa$ as $p\uparrow \pfin$.

Let $b\ge 2$ and $p> 1/b$. By differentiating \eqref{v-1},
$$
T'(1) =
\frac 1{1-G'(\eta)}, \quad T''(1) =\frac{2G'(\eta)(1-G'(\eta))+\eta G''(\eta)}
{(1-G'(\eta))^3},
$$
where $\eta$ is the extinction probability given earlier. 
The term $G'(\eta)$ is given in \eqref{v5}, and by a similar calculation
$$
G''(\eta) = \frac{p^2\eta b(b-1)}{(1-p+p\eta)^2}.
$$
The point $p=\pfin$ is
characterized by equality in \eqref{v1} and \eqref{v2},
and one may use Taylor's theorem to expand $T(s)=T_p(s)$ with
$p$ near $\ptree$ and $s$ near $1$.
With $K$ as in \eqref{v0},
the survival probability $\kappa(p)$ is the largest root in $[0,1]$
of the equation $1-\kappa = K(1-\kappa)$. In conclusion, we obtain 
\eqref{extra1}.

\section{Infinite open surfaces}\label{sec:surf}

We return to the lattice $\L^d$ with $d \ge 3$, and consider
the existence (or not) of an infinite open surface,
that is, an infinite connected set of
open plaquettes with empty boundary. Recall the event $\CS$ of \eqref{sdef},
and the critical probability $\psurf$. We indicate first the elementary
inequality  $\psurf >0$.  Later in this section we prove the inequalities $\pc\leq\psurf$ and $\pfin\leq \psurf$.

\begin{prop}\label{prop:surface0}
For $d \ge 2$, we have $\psurf > 0$.
\end{prop}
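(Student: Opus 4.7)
The plan is to show that $\P_p(\CS) > 0$ for all sufficiently small $p>0$. Since $\CS$ is a translation-invariant event of the i.i.d.\ plaquette field, ergodicity gives $\P_p(\CS) \in \{0,1\}$, so this will yield $\P_p(\CS) = 1$ and hence $\psurf \ge p > 0$.

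I would construct an infinite open surface as a local deformation of the coordinate hyperplane of plaquettes $H := \{\pi \in \Pi : \pi \subseteq \{\tfrac12\} \times \R^{d-1}\}$. Each $(d-2)$-facet of $\zz^d$ lying in $H$ is incident to exactly two plaquettes of $H$, so $H$ itself is a surface; its plaquettes are parameterised by $\Z^{d-1}$, and the closed ones form a Bernoulli$(p)$ site percolation on $\Z^{d-1}$, deeply subcritical for small $p$. For each finite closed cluster $C \subset H$, fill in its $\Z^{d-1}$-holes to form a simply-connected hull $\widetilde C$, and perform a \emph{blister surgery}: replace the plaquettes of $H$ over $\widetilde C$ with the other $(d-1)$-faces of the slab $\widetilde C \times [\tfrac12,\tfrac32]$ (a parallel ``top'' face at height $\tfrac32$ together with vertical side faces). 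Since the full boundary of a $d$-dimensional region is a $(d-1)$-cycle in $\Z/2\Z$-homology, this symmetric-difference swap of $H$ with the slab boundary again produces a cycle, so the no-boundary condition is preserved; connectedness is preserved because the new top and sides meet the remainder of $H$ along the $(d-2)$-facets of $\partial \widetilde C$.

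To make the surgered set an \emph{open} surface and to prevent collisions between surgeries over distinct closed clusters, I would run a block renormalization on $\Z^{d-1}$ at a scale $L$ much larger than a typical cluster diameter. Call a block \emph{good} if every closed plaquette of $H$ in the block lies in a cluster of diameter at most $L/3$ contained in the block, and every plaquette that a blister surgery over such a cluster could require is itself open. Good blocks depend on finitely many plaquette states, so for $p$ small their density is arbitrarily close to $1$; the good-block field is $k$-dependent for some fixed $k$, and the Liggett--Schonmann--Stacey theorem \cite[Thm 0.0]{LSS} (already invoked in the proof of Theorem \ref{thm:koz}) yields stochastic domination of a supercritical Bernoulli site percolation on $\Z^{d-1}$. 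The a.s.\ infinite good-block cluster then carries a globally consistent collection of blister surgeries that assemble into an infinite open surface.

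The main obstacle is ensuring that the good-block condition is simultaneously local and strong enough to guarantee both that every blister surgery within the block is realizable and that the surgered surfaces in adjacent good blocks fit together without creating new boundary; once the condition is formalised in this way, the remaining verification is routine.
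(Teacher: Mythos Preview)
Your approach has a concrete gap at $d=2$. The block renormalization you set up lives on $\Z^{d-1}$, but for $d=2$ this is $\Z$, where site percolation has critical probability $1$: there is no infinite good-block cluster no matter how high the marginal, so the Liggett--Schonmann--Stacey step yields nothing. The hyperplane-deformation picture is still morally right in two dimensions (one wants a bi-infinite open dual path), but a one-level blister with independent block controls cannot produce it.

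Even for $d\ge 3$ there is a second issue you flag but do not resolve: the interface between the infinite good-block cluster and the bad blocks. Over the good cluster your surgered object is open and locally boundaryless, but along every $(d-2)$-facet separating a good block from a bad one the set acquires boundary, so what you have is not a surface. Closing those boundaries forces you either to lift to further heights (which is exactly the Lipschitz-surface construction behind Proposition~\ref{prop:surface}, a genuinely harder statement) or to perform large surgeries over entire bad-block clusters, for which there is no reason the needed plaquettes are open.

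The paper's proof avoids both problems by abandoning the hyperplane picture entirely. It calls a $d$-cube of $\zz^d$ \emph{good} when all $2d$ of its face-plaquettes are open; the good cubes form a finite-range dependent site process on the full lattice $\Z^d$, and \cite[Thm~0.0]{LSS} gives an infinite oriented path $v_1,v_2,\dots$ of good cubes for $p$ small. The set $V=\{v_1,v_2,\dots\}$ then has $\Pi(V)$ as its ``tube'' boundary, and every plaquette of $\Pi(V)$ is a face of some good cube, hence open. Because the renormalization is on $\Z^d$ rather than $\Z^{d-1}$ the argument works uniformly for $d\ge 2$, and because the surface is the boundary of a single connected set there is no patching across good/bad interfaces.
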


A stronger result holds.  We identify a plaquette with the $(d-1)$-dimensional cube in $\R^d$ that is the closed convex hull of its $2^{d-1}$ points, and we identify a surface with the union of the cubes corresponding to its plaquettes.
The surface $S$
is said to be \df{uniformly homeomorphic to a hyperplane}
if there exists a bijection from  $S$ to the
hyperplane $\{0\} \times\R^{d-1}$ that is uniformly continuous with uniformly continuous inverse.
Let $\mu=\mu_d$ be the connective constant of $\L^d$ (see \cite{GH10} for a definition of $\mu$).

\begin{prop}\label{prop:surface}
Let $d \ge 2$. For $p<\mu^{-2}$,
there exists (a.s.)\ an infinite open surface that is uniformly homeomorphic to a hyperplane.
\end{prop}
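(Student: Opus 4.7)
The plan is a Peierls argument at a coarse-grained scale. Partition $\Z^d$ into cubical blocks of large fixed side $L$, and call a block \emph{good} if within it one can realise a canonical flat-or-bump piece of open plaquettes of uniformly bounded complexity that matches the flat reference hyperplane $H_0 := \{\pi(e_y):y\in\Z^{d-1}\}$ (dual to the $u_1$-edges $e_y = \la(0,y),(1,y)\ra$) at the block faces. For $p$ sufficiently small, each block is good with probability arbitrarily close to $1$, and block-goodness is a finite-range dependent field. On the event that every block meeting the slab $\{0\le x_1\le L\}\cap\Z^d$ is good, the local pieces glue across block faces into a single infinite connected open surface $S$ with no boundary; the uniformly bounded complexity of each piece then makes $S$ automatically uniformly homeomorphic to $\{0\}\times\R^{d-1}$.

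The heart of the argument is the Peierls bound ruling out infinite bad-block clusters. Since a bad block requires a prescribed set of at least $c_L$ open edges (with $c_L\to\infty$ as $L\to\infty$), we have $\P_p(B_z\text{ bad})\le C p^{c_L}$. A large bad cluster corresponds to a plaquette-animal in the block lattice, and the classical enumeration of plaquette-animals of size $n$ in $\L^d$ containing a fixed plaquette is at most a constant times $\mu^{2n}$, the factor $\mu^2$ coming from a SAW encoding of the animal by two self-avoiding walks in $\L^d$ (as in the arguments of \cite{ACCFR} and \cite{GrHol}). Combining these estimates bounds the expected number of bad-block enclosures of size $n$ by $(C\mu^2 p^{c_L})^n$, which is summable whenever $p<\mu^{-2}$, provided $L$ is chosen large enough to absorb the constant $C$. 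Borel--Cantelli and translation invariance then give that, almost surely, only finitely many bad clusters meet any fixed compact region, so (after a routine finite surgery near the exceptional region) $S$ is globally well defined.

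That $S$ is a surface in the sense of the paper (connected, empty boundary) and infinite is immediate from the canonical block-local pieces and their matching at shared faces; uniform homeomorphism to the hyperplane follows by composing the block-local homeomorphisms of bounded complexity with the natural identification of $B_z\cap H_0$ with a $(d-1)$-cube.

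The main obstacle is the entropy bound $\mu^{2n}$ for plaquette-animals in $\L^d$: the naive connectedness count yields a larger exponent, and the sharp $\mu^2$ factor requires a careful SAW encoding---this is what produces the threshold $\mu^{-2}$ rather than a weaker $\mu^{-k}$ with $k>2$. A secondary difficulty is the design of the good-block event so that good pieces glue canonically across shared faces; this is a topological matching condition forced by the flatness requirement near the boundary, and can be set up by insisting that, inside each good block, the open plaquettes include a fixed collar that coincides with $H_0$ on a neighbourhood of $\partial B_z$.
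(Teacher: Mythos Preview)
Your scheme is a renormalization/Peierls argument, and as such it is well suited to proving Proposition~\ref{prop:surface0} (that $\psurf>0$)---indeed that is essentially how the paper proves that proposition. But it does not deliver the specific threshold $\mu^{-2}$ of Proposition~\ref{prop:surface}, and the places where you try to force $\mu^{-2}$ into the argument are where the gaps lie.

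First, the claim $\P_p(B_z\text{ bad})\le Cp^{c_L}$ with $c_L\to\infty$ is not justified. With the flat-with-collar definition of ``good'' you describe, a \emph{single} open $u_1$-edge near the collar already spoils matching, so badness does not force many open edges. If instead you build into ``good'' the existence of some local Lipschitz deformation, then what badness forces is a \emph{crossing} by a self-avoiding path, not a prescribed set of $c_L$ open edges; the probability of that is governed by $(\mu\sqrt p)^L$, not by $p^{c_L}$. Second, your Peierls summation $(C\mu^2 p^{c_L})^n$ does not single out $p<\mu^{-2}$: with $c_L\to\infty$ it would be summable for every $p<1$, which is too strong and signals that the bookkeeping of entropy versus cost is off. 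Third, the asserted $\mu^{2n}$ bound for general plaquette-animals via a two-SAW encoding is not a standard fact; animal growth constants are not $\mu^2$, and the references you cite concern contours/cutsets rather than arbitrary connected plaquette sets. Finally, you are mixing scales: the enclosure you count lives in the block lattice, while the $\mu$ you invoke is the connective constant of the original lattice; they happen to coincide because the block lattice is a copy of $\Z^d$, but then $\mu^2$ plays no special role in the summability criterion.

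The paper's proof is quite different and explains where $\mu^{-2}$ actually comes from. One works with the diagonal hyperplane $H=\{x:\sum_i x_i=0\}$ and declares a path \emph{good} if every step that increases $\sum_i x_i$ traverses an open edge (no condition on decreasing steps). Let $K$ be the set of vertices in $H_+$ reachable from $H_0$ by good paths. A good self-avoiding walk of length $\ell$ reaching height $r$ has $(\ell+r)/2$ upward steps and hence probability $p^{(\ell+r)/2}$; summing over SAWs gives a series in $\mu\sqrt p$, convergent precisely when $p<\mu^{-2}$, and yields $\sum_{y\in H_r}\P_p(y\in K_0)\le C\alpha^r$ for some $\alpha<1$. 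Thus $K$ is a.s.\ of finite height above each base point, and the set $S$ of plaquettes dual to upward edges from $K$ to $K^{\mathrm c}$ is an infinite open surface. The projection onto $H$ along the diagonal is then the uniform homeomorphism. If you want to salvage a block version, the correct ``bad'' event is the existence of a good-path crossing of the block, whose probability is controlled by $(\mu\sqrt p)^L$; but once you see that, the block layer is superfluous and you are back to the paper's direct construction.
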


Although this implies Proposition \ref{prop:surface0}, we include a short proof
of the first proposition also.

\begin{proof}[Proof of Proposition \ref{prop:surface0}]
We call a $d$-facet $B$ of $\zz^d$ \df{good} if each of
its $2d$ incident plaquettes is open, and we write $\g(B)=1$ if $B$ is good, and
$\g(B)=0$ otherwise. As in the proof of Theorem \ref{pfin_less}, the cubes of
$\zz^d$ may be considered as the vertices of a graph, and there exists
$r<\oo$ such that their states are $r$-dependent. By \cite[Thm 0.0]{LSS},
there exists $p' \in (0,1)$ such that: for $p \le p'$, the vector $\g$
stochastically dominates product measure with parameter $\pcs$, the
critical probability of oriented site percolation on $\L^d$.

Therefore, for $p < p'$, there exists a.s.\ an infinite oriented path
$v_1,v_2,\dots$ of $\Z^d$ such that the cubes
$v_i+\{-\frac12,\frac12\}^d$ are good. The set $V=\{v_1,v_2\dots\}$
generates an infinite surface $\Pi(V)$ of open plaquettes.
(Note that $\Pi(V)$ is homeomorphic to a hyperplane, but not uniformly.)
\end{proof}

\begin{proof}[Proof of Proposition \ref{prop:surface}]
This relies on the methods developed in \cite{GH10},
and we include only a sketch of the proof.

For $x=(x_1,x_2,\dots,x_d) \in \R^d$, let $s(x) = \sum_i x_i$.
Define the hyperplane  $H=\{x\in \Z^d: s(x) =0\}$ of $\R^d$.
For $r \ge 0$, let $H_r = \{y\in \Z^d: s(y) = r\}$,
and  $H_+ =\bigcup_{r\ge 0} H_r$.

A path
$v_0,v_1,\dots, v_k$ of $\L^d$ is called \df{good} if (i) $v_i \in H_+$ for all $i$,
and (ii) for every $i$ with $s(v_{i-1}) < s(v_i)$, the edge
$\langle v_{i-1},v_i\rangle$ is open.  For $x \in H_0$, let
$K_x$ be the set of all $y \in H_+$ such that there exists a good path
from $x$ to $y$, and let $K=\bigcup_{x\in H_0} K_x$.

Let $p< \mu^{-2}$ and $\a \in (\mu p,1)$.
By an adaptation of the proof of \cite[Lemma 4]{GH10},
there exists $C=C(p,\a)<\oo$ such that, for $x \in H_0$,
\begin{equation}\label{g102}
\sum_{y \in H_r}\P_p(y \in K_x) \le  C\a^r, \qquad r \ge 1.
\end{equation}
As in \cite[eqn (2)]{DDGHS}, for $y \in H_+$,
\begin{align}\label{g103}
\P_p(y \in K) &\le \sum_{x\in H_0} \P_p(y \in K_x)\\
&= \sum_{z\in H_{s(y)}} \P_p(z \in K_0)
\le C \a^{s(y)}, \nonumber
\end{align}
by \eqref{g102}.

We now follow the proof of \cite[Thm 1]{GH10}. Let
$u,v\in H_+$ be such that $e=\langle u,v\rangle$ is an edge of
$\Z^d$ with  $u \in K$ and $v \notin K$.
Then $s(u) < s(v)$ and $e$ is necessarily closed, so that the plaquette $\pi(e)$ is open.
The set $S$ of all such plaquettes forms a surface.
The required homeomorphism $\phi: S \to H$ is given by the projection
onto $H$,
\begin{equation*}
\phi(z) = z - \ol z e,
\end{equation*}
where $\ol z= (z_1+z_2+\dots+z_d)/d$ and $e =(1,1,\dots,1)$.
\end{proof}

We indicate next that $\psurf \ge \pc$, and we defer the proof until later in this section.

\begin{thm}\label{sub-crit-surf}
For $d \ge 3$, we have $\pc\leq \psurf$.
\end{thm}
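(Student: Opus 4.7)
The plan is to deduce $\pc \leq \psurf$ as a short consequence of Theorem \ref{relation-intro} (the ostensibly stronger statement $\pfin \leq \psurf$), combined with the trivial inequality $\pc \leq \pfin$ already noted in Section \ref{sec:perc}: for $p < \pc$ we have $X = \Z^d$ a.s., so $\CT$ certainly occurs. Chaining these gives $\pc \leq \pfin \leq \psurf$. If this section presents $\pfin \leq \psurf$ first, there is nothing more to do.

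For a direct argument (which may be the intended route, since the two inequalities are flagged as separate theorems) I would construct an infinite open surface explicitly for $p < \pc$. Set $V^- = \{x \in \Z^d : x_1 \leq 0\}$, let $A = V^- \cup \{x : C_x \cap V^- \neq \es\}$, and let $\Si$ be the set of plaquettes dual to edges of $\L^d$ with one endpoint in $A$ and the other in $\comp{A}$. Using $\P_p(\rad(C_0) < \oo) = 1$ for $p < \pc$, the vertex $(N, 0, \dots, 0)$ lies in $\comp{A}$ with probability tending to $1$ as $N \to \oo$, so $\comp{A}$ is a.s.\ unbounded. Any $A$-to-$\comp{A}$ edge is forced closed (else its $\comp{A}$-endpoint would be absorbed into $A$), so every plaquette in $\Si$ is open. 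Arguing exactly as in the proof of Lemma \ref{topolem}, around any $(d-2)$-facet the 4-cycle of incident edges crosses between $A$ and $\comp{A}$ an even number of times, so $\Si$ has empty boundary; this property passes to each connected component of $\Si$, since any two $\Si$-plaquettes sharing a $(d-2)$-facet automatically lie in the same component.

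The remaining step, and the real obstacle, is to show that $\Si$ has an infinite connected component. I would argue by contradiction: if every component were finite, each (being a connected $(d-1)$-cycle modulo $2$) would bound a bounded region of $\R^d$ by a discrete Jordan--Brouwer / Alexander-duality argument, the same kind of topological input supplied by \cite[Thm 7.3]{G-RC} in Lemma \ref{topolem}. Then $\R^d \setminus |\Si|$ would have a unique unbounded component, into which both of the unbounded sets $A$ and $\comp{A}$ must spread, contradicting the fact that $\Si$ separates them (any continuous path in $\R^d \sm |\Si|$ can be homotoped to a $\L^d$-path whose edges all have duals outside $\Si$, keeping its endpoints on the same side). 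Any infinite component of $\Si$ then provides the required infinite open surface, and hence $\P_p(\CS) = 1$ for all $p < \pc$.
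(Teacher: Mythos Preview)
Your chaining argument is circular as the paper is organised: the proof of Theorem~\ref{relation-intro} explicitly invokes Theorem~\ref{sub-crit-surf} to cover the case $\pc=\pfin$, so you cannot cite Theorem~\ref{relation-intro} as a black box here.

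Your direct construction is close in spirit to the paper's route (and essentially \emph{is} the alternative proof the paper sketches at the end of the section, with $W_n$ the set of sites joined to $\{v: v_1=0,\ \|v\|\le n\}$). The set-up and the empty-boundary argument are fine, but the final step has a real gap. You only show that $\comp A$ is \emph{unbounded}; what is needed is that $\comp A$ possesses an \emph{infinite connected component}---otherwise $\comp A$ may be a disjoint union of finite holes, each enclosed by a finite component of $\Si$, and there is no contradiction. (This is fixable via Borel--Cantelli and $\E_p(\rad(C_0))<\oo$: a.s.\ only finitely many points on the positive $x_1$-axis lie in $A$.) More seriously, the assertion ``if every component of $\Si$ were finite then $\R^d\setminus|\Si|$ has a unique unbounded component'' is false for a general infinite union of finite closed hypersurfaces (e.g.\ the boundaries of $[-n,n]^{d-1}\times[0,1]$, $n\ge1$, produce two unbounded components), and you give no reason why such configurations are excluded. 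Even granting both infinite components of $A$ and $\comp A$, showing that the separating plaquette set has an infinite \emph{connected} piece is exactly the content of Proposition~\ref{sep}, which is not a one-line Alexander-duality fact. The paper sidesteps all of this by never leaving the finite world: it grows finite connected sets $V_n$, applies Lemma~\ref{topolem} at each stage, and uses Lemma~\ref{lem:prel} to pass to the limit $\Pi_\oo$, which automatically has empty boundary and no finite components; non-emptiness is then a Borel--Cantelli argument (via $\E_p(\rad(C_0))<\oo$) showing a specific plaquette on the negative $x_1$-axis eventually stabilises in $\Pi(V_n)$.
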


If the complement of the infinite open cluster contains an infinite
component, one may deduce the existence of an infinite surface. This is the
content of Theorem \ref{relation-intro}, which is a consequence of the following more
general proposition.

\begin{prop}\label{surflem}
\label{sep}
  Let $A\subset\Z^d$ be infinite and connected, and suppose that
its complement $\comp{A}$ has
an infinite component.  There exists an infinite surface of
plaquettes that are dual to edges of $\L^d$ having one vertex in $A$ and the other in $\comp{A}$.
\end{prop}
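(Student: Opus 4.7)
Let $B$ denote an infinite connected component of $\comp{A}$, whose existence is hypothesized, and let $\Sigma$ denote the set of plaquettes dual to edges of $\L^d$ with one endpoint in $A$ and the other in $\comp{A}$. The plan is to show that some connected component of $\Sigma$, in the plaquette-adjacency graph of $\zz^d$, is both infinite and a surface. First, I claim every connected component of $\Sigma$ has empty boundary. The $4$-cycle parity argument used in the last paragraph of the proof of Lemma~\ref{topolem} shows that each $(d-2)$-facet $f$ is incident to an even number of $\Sigma$-plaquettes. Since any two plaquettes incident to a common $(d-2)$-facet are adjacent by definition, the four plaquettes at $f$ are pairwise adjacent, so all those of them lying in $\Sigma$ lie in a single component of $\Sigma$. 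Thus each component of $\Sigma$ is incident to an even number of its own plaquettes at every $(d-2)$-facet, so is a surface.

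Next, pick an edge $\la x, y\ra\in\E^d$ with $x\in A$ and $y\in B$; such an edge exists because $B$ is a maximal component of $\comp{A}$, so any vertex adjacent to $B$ but not in $B$ must lie in $A$, and such vertices are nonempty (walk along any path in $\L^d$ from an arbitrary $a\in A$ to an arbitrary $b\in B$ and inspect the first entry into $B$). Let $\pi_0=\pi(\la x, y\ra)\in\Sigma$, and let $S$ be the component of $\pi_0$ in $\Sigma$. Suppose for contradiction that $S$ is finite. Then $S$ is a finite, connected, boundary-less set of plaquettes. By a discrete Jordan--Brouwer-type separation principle for $\Z^d$, of the same kind as the results underlying Lemma~\ref{topolem} and \cite[Thm 7.3]{G-RC}, there is a finite set $V_S\subset\Z^d$ such that the plaquettes in $S$ are precisely the duals of those edges of $\L^d$ having exactly one endpoint in $V_S$. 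In particular, every edge of $\L^d$ with exactly one endpoint in $V_S$ is an $A$-to-$\comp{A}$ edge.

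Since $\pi_0\in S$, exactly one of $x, y$ lies in $V_S$; assume without loss that $y\in V_S$ (the case $x\in V_S$ is symmetric on swapping the roles of $A$ and $B$). As $B$ is infinite and connected while $V_S$ is finite, there exists a path in $B$ from $y$ to some vertex of $B\setminus V_S$, and some edge of this path has one endpoint in $V_S$ and the other outside; this edge has both endpoints in $B\subset\comp{A}$, contradicting the fact that every such boundary edge of $V_S$ is $A$-to-$\comp{A}$. Hence $S$ is infinite, and by the first step it is an infinite surface of the required form. The main obstacle I anticipate is making the bounding statement precise—namely, that a finite surface in $\zz^d$ is the boundary of a finite $d$-chain of $\Z^d$-vertices in the stated edge sense; this is a standard discrete Jordan--Brouwer fact, but pinning down the exact reference (or proving it inline along the lines of Lemma~\ref{topolem}) is the key remaining piece of work.
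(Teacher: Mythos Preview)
Your argument is correct, and the gap you flag is indeed fillable: a finite connected set of plaquettes with empty boundary is the full plaquette-boundary of some finite $V_S\subset\Z^d$. One clean way to see this is to define $V_S$ as the set of vertices $v$ such that some (equivalently, every) path from $v$ to a fixed far-away basepoint crosses $S$ an odd number of times; the empty-boundary condition makes this parity path-independent via the same $4$-cycle observation you already used, and finiteness of $S$ forces $V_S$ to be finite. With that in hand, your contradiction goes through exactly as written (and the case $x\in V_S$ is indeed symmetric, using that $A$ is infinite and connected).

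Your route is genuinely different from the paper's. The paper avoids any Jordan--Brouwer statement by an exhaustion argument: it takes $A_n$ to be the component of $A\cap B_n$ containing a fixed $x\in A$, applies Lemma~\ref{topolem} to each finite connected $A_n$ to obtain surfaces $\Pi(A_n)$, and then invokes Lemma~\ref{lem:prel} to pass to the limit $\Pi_\infty$, which automatically has empty boundary and no finite components; non-emptiness follows from the existence of an edge from $A$ to the infinite component of $\comp A$. Your approach is more direct and conceptually transparent---it isolates exactly why finiteness of the component would contradict the infiniteness of both $A$ and $B$---at the cost of importing the bounding lemma. The paper's approach stays entirely within the toolkit already developed (Lemmas~\ref{topolem} and~\ref{lem:prel}) and thereby sidesteps the need for that extra ingredient.
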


\begin{proof}[Proof of Theorem \ref{relation-intro}]
If $\pc=\pfin$, the claim is a trivial consequence of Theorem
\ref{sub-crit-surf}. Assume that $\pfin > \pc$. Let $\pc<p<\pfin$, and apply
Proposition \ref{surflem} to the vertex-set $A$ of the infinite open cluster.
\end{proof}

There follows a preliminary lemma that will be useful in the remaining proofs.
The convergence of this
lemma is in the product topology on $\{0,1\}^\Pi$.
That is, for a sequence $\Pi_n$ of subsets of $\Pi$, we write $\Pi_n \to \Pi_\oo$
if every $\pi\in\Pi_\infty$ lies in all but finitely many $\Pi_n$
while every $\pi\not\in\Pi_\infty$ lies in only finitely many $\Pi_n$.

\begin{lemma}\label{lem:prel}
Let $W_1\subseteq W_2\subseteq\cdots$ be an increasing sequence of
subsets of $\Z^d$ that are connected and finite and satisfy
$|W_n|\to\oo$ as $n\to\oo$. The limit 
$\Pi_\oo=\lim_{n\to\oo}\Pi(W_n)$ exists and has empty boundary and no finite components.
\end{lemma}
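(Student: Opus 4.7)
The plan is to verify in order: (i) convergence of $\Pi(W_n)$ in the product topology, (ii) emptiness of the limit's boundary, and (iii) absence of finite components. Throughout, Lemma~\ref{topolem} applied to each $W_n$ gives that $\Pi(W_n)$ is a connected set of plaquettes with empty boundary.

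For (i), fix a plaquette $\pi=\pi(e)$ with dual edge $e=\la x,y\ra$. Since the $W_n$ are nested, the only non-trivial case in deciding whether $\pi\in\Pi(W_n)$ is that exactly one endpoint (say $x$) enters $W_n$ at some stage $N$ while $y$ remains outside every $W_n$. For $n\ge N$, $y$ is adjacent to $W_n$, so $\pi\in\Pi(W_n)$ iff $y$ lies on an infinite path of $\L^d$ avoiding $W_n$, a condition non-increasing in $n$ by $W_n\subseteq W_{n+1}$. Hence $\ind\{\pi\in\Pi(W_n)\}$ is eventually constant, which establishes the existence of $\Pi_\oo$.

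For (ii), fix any $(d-2)$-facet $f$ of $\zz^d$. As shown in the proof of Lemma~\ref{topolem}, exactly four plaquettes of $\Pi$ are incident to $f$. Summing the eventually-constant indicators from (i) over this finite set, the number of plaquettes of $\Pi(W_n)$ incident to $f$ is eventually constant and equals the corresponding count for $\Pi_\oo$. Since $\Pi(W_n)$ has empty boundary for every $n$, this stable value is even, so $\Pi_\oo$ also has empty boundary.

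The main obstacle is (iii). Suppose for a contradiction that $\Pi_\oo$ has a finite connected component $C$, and let $B$ be the finite set of plaquettes adjacent to $C$ but not lying in $C$; then $B\cap\Pi_\oo=\emptyset$. Applying (i) to each of the finitely many plaquettes in $C\cup B$ produces $N$ such that, for every $n\ge N$, $C\subseteq\Pi(W_n)$ while $B\cap\Pi(W_n)=\emptyset$, so $C$ is a union of connected components of $\Pi(W_n)$. The connectedness of $\Pi(W_n)$ from Lemma~\ref{topolem} then forces $C=\Pi(W_n)$. To close, I would invoke the vertex-isoperimetric inequality in $\Z^d$ (e.g.\ \cite[Thm 8]{BL91}, applied to $\ol{W_n}$, the union of $W_n$ with its finite complementary components) to obtain $|\Pi(W_n)|=|\Dee W_n|\ge|\De\ol{W_n}|\ge c|W_n|^{(d-1)/d}\to\oo$, contradicting the finiteness of $C$.
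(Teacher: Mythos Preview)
Your proof is correct and follows essentially the same route as the paper's: convergence via eventually-constant indicators, and then empty boundary and absence of finite components by passing cylinder properties from $\Pi(W_n)$ to the limit (the paper compresses your entire step (iii) into the single phrase ``by the same argument''). Your appeal to the isoperimetric inequality in (iii) is heavier machinery than needed---once $C=\Pi(W_n)$ for all large $n$, the fact that $\Pi(W_n)$ separates $W_n$ from $\infty$ already forces $W_n$ to lie in the finite region cut off by the fixed finite plaquette set $C$, contradicting $|W_n|\to\infty$.
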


If the set $\Pi_\oo$ of Lemma \ref{lem:prel} is non-empty, then it possesses only
infinite components, and each such component has empty boundary.
Here is an example for which  $\Pi_\oo$ is non-empty.
Let
$W_n = \{(0,w)\in\Z\times\Z^{d-1}: \|w\|_{d-1} \le n\}$.
Then $\Pi_\oo$ comprises two infinite components.

\begin{proof}
Let $\Pi_n=\Pi(W_n)$. Since each $W_n$ is finite and connected in $\L^d$,
by Lemma \ref{topolem}, the $\Pi_n$ are connected and have empty boundaries.

We claim first that $\Pi_\infty:=\lim_{n\to\infty} \Pi_n$ exists in the sense of the
product topology. More specifically, we claim that, for any plaquette $\pi\in\Pi$,
exactly one of the following holds:
\begin{numlist}
\item $\pi\notin \Pi_n$ for all $n$,
\item there exists $k$ such that $\pi\in \Pi_n$ if and only if $n \ge k$.
\item there exist $k$, $m$ satisfying $k<m$ such that $\pi\in \Pi_n$ if and only if $k\le n < m$.
\end{numlist}
To prove this, we must show that, as the sequence $\Pi_n$ is
revealed in sequence, if $\pi$ appears,
it may be removed, but if so it never reappears.
For given $\pi\in\Pi$, let $k=\min\{n: \pi\in \Pi_n\}$ and assume $k<\oo$.
Thus $\pi=\pi(e)$ for some $e=\la v,w\ra$ with $v \in W_k$ and
$w$ joined to infinity off $W_k$. Either $\pi\in \Pi_n$ for all $n \ge k$, or
$m=\inf\{n>k:\pi\notin\Pi_n\}$ satisfies $m < \oo$.
In the latter case, $w$ lies either in $W_m$ or
in a hole of $W_m$ (that is, a finite connected component of
$\Z^d \sm W_m$). Therefore, for $n\ge m$,
$w$ lies in either $W_n$ or a hole of $W_n$. In either case $\pi\notin \Pi_n$,
and the claim is shown.

Let $f$ be a $(d-2)$-facet of $\zz^d$.  The collection of subsets
$F\subset\Pi$ such that $f$ lies in an even number of members of $F$ is a
cylinder subset of $\{0,1\}^\Pi$. Since every $(d-2)$-facet lies in an even
number of plaquettes in every $\Pi_n$, and $\Pi_n \to \Pi_\oo$, $\Pi_\oo$ has
empty boundary. By the same argument, $\Pi_\oo$ has no finite component (in
the plaquette graph with adjacency relation $\sim$).
\end{proof}

\begin{proof}[Proof of Theorem \ref{sub-crit-surf}]
We shall show that $p \le \psurf$ for all $p<\pc$.
Let $\om=(\om(e): e\in \E^d) \in \Om$ be such that
\be
\text{every open cluster of $\om$ is finite.}
\label{gen5}
\ee
From $\om$, we construct an increasing sequence $V_1,V_2,\dots$ of vertex-sets of $\Z^d$
as follows.
Let $V_1$ be the vertex-set of the open cluster $C_0$ at $w(0) := 0$.
Suppose we have constructed $V_1,V_2,\dots,V_n$, and each is finite.
Let $v=(v_1,v_2,\dots,v_d)$ be a rightmost vertex of $V_n$, in that $v_1 \ge x_1$
for all $x \in V_n$.
Let $w(n+1)=v+u_1$ where $u_1=(1,0,0,\dots,0)$,
and let $C_{w(n+1)}$ be the vertex-set of the open cluster of $w(n+1)$. Let
$V_{n+1}= V_n \cup C_{w(n+1)}$. Note that $V_{n+1}$ is
finite, and $|V_n| \to \oo$ as $n\to\oo$.

We apply Lemma \ref{lem:prel} to the increasing sequence $(V_n)$
to obtain the limit set $\Pi_\oo=\Pi_\oo(\om)$ of plaquettes.
The proof is completed by showing that, for $p<\pc$,
\be
\P_p(\Pi_\oo \ne \es) =1.
\label{gen6}
\ee
Let $p<\pc$, so that \eqref{gen5} holds almost surely.
Let $L$ be the singly-infinite line $[(-\oo,0]\times\{0\}^{d-1}]\cap\Z^d$.
Since $\Pi_n$ is connected and separates $V_n$ from infinity,
there exists an edge $f_n =\la -r-1,-r\ra\in L$ such that $\pi(f_n)\in \Pi_n$, and we pick
$r=r_n$ maximal with this property. Now, $f_n \ne f_{n+1}$ only if
$C_{w(n+1)} \cap L \ne \es$.
However,
$$
\sum_{n = 0}^\oo \P_p(C_{w(n+1)}\cap L \ne \es)
\le \sum_{n = 0}^\oo \P_p(\rad(C_0) \ge n) = \E_p(\rad(C_0)),
$$
where $\rad(C_0) = \sup\{\|x\|: 0\lra x\}$ is
the radius of $C_0$ as in \eqref{def-rad}.

If $p < \pc$, we have that 
$\E_p(\rad(C_0)) < \oo$; see \cite{AB,Men2,Men1},
and also \cite[Chap.\  5]{G99}.
By the Borel--Cantelli lemma, a.s.\ only finitely many of
the $w(n)$ are connected by open paths to $L$. Therefore, there exists
a.s.\ an edge $f \in L$ such that
$\pi(f)\in\Pi_\oo$, whence $\Pi_\oo \ne \es$ a.s.
\end{proof}

We used the fact that $\E_p(\rad(C_0))<\oo$ when $p<\pc$, at the end of the
above proof. Note that the argument cannot be valid when $d=2$ and
$p=\pc=\frac12$, since then $\Pi_\oo=\es$ a.s.

We remark that an alternative proof of Theorem~\ref{sub-crit-surf} proceeds
by applying Lemma~\ref{lem:prel} to the sequence $(W_n)$, where $W_n$ is the
set of sites of $\Z^d$ connected by open paths to $\{v\in\Z^d: v_1=0,
\|v\|\leq n\}$.

\begin{proof}[Proof of Proposition \ref{sep}]
Let $B_n=(-n,n]^d \cap \Z^d$.
Fix $x\in A$, and let $A_n$ be the component of $A\cap B_n$
containing $x$; we set $A_n = \es$ if $x\not\in B_n$.
Let $\Pi_n=\Pi(A_n)$.
By Lemma \ref{lem:prel},
the limit $\Pi_\infty:=\lim_{n\to\infty} \Pi_n$ exists,
and (if non-empty) has empty boundary and only infinite components. Moreover, it is
independent of the choice of $x$ since,
for $x,y\in A$, there exists a path of $A$ joining $x$ to $y$ and,
for all sufficiently large $n$, this path lies in $B_n$.

We argue as follows to show that $\Pi_\oo\ne \es$.
There exists an edge $f=\la a,b\ra$ with $a\in A$ and $b$ joined
to infinity off $A$, so that  $\pi(f) \in \Pi_n$ for all large $n$.
\end{proof}

\section*{Open Questions}

\begin{enumerate}
\item Does $\pc < \psurf$ hold for $\Z^d$ with $3 \le d \le 18$?
\item Does $\pc<\pfin$ hold for $\Z^d$ with $3\leq d\leq 18$?
\item Does $\pfin< \psurf$ hold for $\Z^d$ with $d\geq 3$?
\end{enumerate}

\section*{Acknowledgements}
GRG acknowledges the hospitality of the Department of
Mathematics at the University of British Columbia, the Section de Math\'ematiques at the University of Geneva,
and the Theory Group at Microsoft Research.
He was supported in part by the Swiss National Science Foundation,
and the EPSRC under grant EP/103372X/1. GK was supported
in part by the Israel Science Foundation.

\bibliography{plaq-final4}
\bibliographystyle{amsplain}
\end{document}